\def\tcr{\textcolor[rgb]{1,0,0}}
\numberwithin{equation}{section}
\newtheorem{theorem}{Theorem}[section]
\newtheorem{lemma}[theorem]{Lemma}
\theoremstyle{definition}
\newtheorem{remark}[theorem]{Remark}
\renewcommand{\ge}{\geqslant}
\renewcommand{\le}{\leqslant}
\def\R{\mathbb R}
\def\tr{{\rm tr\,}}
\def\vect{\mbox{\rm vec\,}}
\begin{document}
%
\title{Iterative optimal solutions of linear matrix equations for Hyperspectral and Multispectral image fusing}
%
%
%

\author{Frank Uhlig,
        An-Bao Xu* 
\thanks{This work was supported in part by the National Natural Science Foundation of China (11801418) and in part by the grant of the China Scholarship Council (201406130046) }
\thanks{Frank Uhlig is with the Department of Mathematics and Statistics at Auburn University, Auburn, AL 36849-5310, USA \newline
(e-mail: uhligfd@auburn.edu).}
\thanks{An-Bao Xu is at the College of Mathematics and Physics, Wenzhou University, Zhejiang 325035, China \newline
(e-mail: xuanbao@wzu.edu.cn). }}

%
%

\markboth{}%
{Shell \MakeLowercase{\textit{et al.}}: Bare Demo of IEEEtran.cls for IEEE Journals}
%



\maketitle

\begin{abstract}
For a linear matrix  function $f$ in $X \in \R^{m\times n}$  we consider inhomogeneous linear matrix equations $f(X) = E$ for $E \neq 0$  that have or do not have solutions. For such systems we compute optimal norm constrained solutions iteratively using the Conjugate Gradient and Lanczos' methods in combination with  the More-Sorensen optimizer.  We build codes for ten  linear matrix equations, of Sylvester, Lyapunov,  Stein and structured types and their T-versions, that differ only in two five times repeated equation specific code lines.  Numerical experiments with linear matrix  equations are performed that illustrate  universality and  efficiency of our method for dense and small data matrices, as well as for sparse and certain structured input matrices. Specifically we show how to adapt our universal method for sparse inputs and for structured data such as encountered when fusing image data sets via a Sylvester equation algorithm to obtain an image of higher resolution.
\end{abstract}


\begin{IEEEkeywords}
Linear matrix equation, multiband image fusion, Sylvester equation, Tikhonov regularization, norm-constrained optimization, structured matrix algorithm.
\end{IEEEkeywords}
\IEEEpeerreviewmaketitle

%
\IEEEpeerreviewmaketitle

\section{Introduction}

Linear systems have a long history and near infinitely many uses and applications. The most basic linear vector system is $Ax =b$ for an $ m$ by $n$ matrix $A$ and vectors $x \in \R^n$ and $b \in \R^m$ where $A$ and $b$ are given and $x$ is unknown. Clearly $Ax = b$ is solvable precisely when $b$ lies in the column space of $A$. Otherwise the given system is unsolvable. Yet even then  a best 'near solution' may be useful for applications. And for unsolvable linear vector equations $Ax=b$, one might need to find a vector $x$ that minimizes the residue  $ \| Ax - b\|$ over all $x \in \R^n$, measured in an appropriate norm. This is called the 'least squares problem' for linear matrix vector equations when we use the Euclidean norm.

Generalizing to matrix equations, we call an equation $f(X) = E$ linear if $f$ is linear in the unknown matrix $X$. In this sense, the classical Sylvester equations $AX + XB = E$ with $f(X) = AX+XB$ or $A^TXA + B^TXB = E$ with $g(X) = A^TXA + B^TXB $ are linear matrix equations in the unknown matrix $X$, and so is the commutator equation $AX - XB = O$ for $h(X) = AX - XB$. More specifically, the eigenvector equation $Ax - x \lambda = 0$ for a known eigenvalue $\lambda$ of $A$ is linear, and so forth. Both the continuous Lyapunov equation $A X + X A^T = E$ and its discrete analogue $A X A^T - X = E$ have this same linear form $f(X) = E$ for different linear matrix functions $f$. In this paper we deal with  generalized versions of  inhomogeneous Sylvester, Lyapunov and Stein equations \cite{XM15,SCZ11}, as well as with their transposed or T-versions of the following general form:
 \begin{equation}\label{1.0}
 f(X) = \sum_{k=1}^{k_0} A_k X B_k +  \sum_{j=1}^{j_0} C_j X^T D_j   = E \neq O
 \end{equation}
 where $A_k  \in \R^{p\times m}$,  $B_k \in \R^{n \times q}$,  $C_j  \in \R^{p\times n}$,  $D_j \in \R^{m \times q}$, $E \in \R^{p \times q}$ are given for $k = 1, ..., k_0$, $j = 1, ..,j_0$ and $X \in \R^{m\times n}$ is unknown. Here the row and column sizes $m,n,p,q$ are so that the intended matrix multiplications in (\ref{1.0}) can be performed and either sum can be void. We call  a matrix equation to be of T-type if the second sum (involving $X^T$) is not void, i.e., if $j_0 >0$. 
 
Fusing hyperspectral (HS) and multispectral (MS) images, also known as  multiband image fusion, has recently  drawn special attention in remote sensing \cite{LDABBC15,LMCH18,SBAC15}. Its purpose is to reconstruct a high-spatial and high-spectral multiband image from two degraded and complementary observed images. Based on \cite{WDT} and \cite{WDTD}, this challenging task can be solved by using a Sylvester equation for large sparse matrices with  a certain structure which is just a special case  of  (\ref{1.0}). Here we develop a universal method to solve a multitude of linear matrix equations and then adapt it to solve the sparse structured Sylvester equation for  multiband image fusion problems efficiently.

Here we consider the generalized Sylvester matrix equation $ f(X) = \sum_{k=1}^{k_0} A_k X B_k +  \sum_{j=1}^{j_0} C_j \tcr{X} D_j   = E$ (\ref{1.0}), induced from  multiband image fusion.
 This equation is well studied and has many other applications, see  \cite{BNP,7,15,16,Si14,WDT,ZHNP,LH12}  for example.  
In particular,    we consider its classical Tikhonov regularization: \\
 Find
\begin{equation}\label{1.1}
\mathop {\arg\min }\limits_{X\in \R^{m\times n}} \textstyle{1 \over 2}\|{f(X)-E} \|^2_F +\zeta \| X \|_F,
\end{equation}
where $\zeta >0$ is the regularization parameter.\\
Problem (\ref{1.1}) is equivalent to the following Frobenius norm  'least squares problem' with  norm inequality constraint:
\begin{eqnarray}\label{1.2}
\mathop {\arg\min}\limits_{X \in \R^{m\times n}} \textstyle{1 \over 2}\| {f(X)-E} \|_F^2  
\quad \mbox{subject  to}\quad  \| X \|_F \le \Delta 
\end{eqnarray}
$ \text{for some constant } \Delta  > 0.$
\noindent
The proof of the equivalence of  formulation (\ref{1.2}) and  Problem (\ref{1.1}) is given in Section II.
Finally note that since  $\{ X: \|X\|_F\le \Delta\}$ is  compact, Problem (\ref{1.2})  has at least one global minimum by  Weierstrass' Theorem.\\[-4mm]

The rest of this paper is structured as follows. Section II gives some notations and preliminaries. In Section III we propose and develop a matrix product based iterative method to solve Sylvester type matrix equations that uses the \underline{g}eneralized \underline{L}anczos \underline{t}rust \underline{r}egion algorithm (GLTR)  for solving problem (\ref{1.1}), see \cite{9} and \cite{GLRT}. Our GLTR algorithm is based on the Steihaug-Toint algorithm \cite{9,10}. 
In Section IV we prove general convergence of the method and speed up  the algorithm further. In Section V, applications from the literature and numerical tests will  illustrate the efficiency and accuracy of our algorithm in theoretical and real world applications, both for generalized Sylvester, Lyapunov and Stein equations and their T-versions for dense, sparse and structured sparse matrices, respectively and  all alike.

Previously semi-direct canonical form methods have been used for dense matrix equations problems, while Krylov projection methods are generally preferred for sparse linear matrix equations. The first class of methods is  based on normal form computations of associated matrices and uses Francis' QR algorithm or  SVD computations to form triangular equivalent systems that are then solved for the entries of the unknown solution $X$. See Bartels and Stewart  \cite{BS72} for Sylvester and Kitagawa \cite{K77} or Barraud \cite{B78} for Lyapunov equations. With the advent of multishift Francis QR by Braman, Byers and Mathias \cite{BBM02a, BBM02b}, normal form based methods  could  theoretically be applied for matrix dimensions up to 10,000 by 10,000 and succeed. The Krylov projection approach was developed more recently, see the survey article by Simoncini \cite{Si14} or her earlier paper \cite{Si07} for solving sparse Lyapunov matrix equations and also Dopico \cite{DGKS16} for sparse structured T-Sylvester equations.

Currently  linear matrix equation problems have become huge and structured. Sparse and direct eigen based methods are generally not able to handle such inputs efficiently. Our iterative method relies completely on matrix multiplications and has low overhead and low storage requirements. Our set of  algorithms and their computational codes are an extension and outgrowth of the second named author's two previous papers \cite{XP13,XXP15}, which have dealt with the 1-term Sylvester type matrix equation $AXB = E$. The current paper builds in part on these earlier works and refines the algorithm, as well as extends it  to solve three new classes of linear matrix equations. Moreover, we deal with sparse and structured input matrices as well. In each of our eleven versions for Sylvester-like linear matrix equation problems, only ten lines of code use maximally four matrix multiplications each and our iterations counts stay low. This gives our iterative method a great advantage for dense matrices over canonical form based methods, as well as performing well for general sparse matrices. And moreover, our  codes and method can easily be adapted for structured matrices, see the Subsection C for a computed example from multiband image fusion. Our iterative algorithms work alike for solvable and unsolvable linear matrix equations and do so without any known spectral restrictions on the input matrices that  sparse or structured Krylov methods often encounter, see \cite[Numerical Tests 7.3 through 7.9]{DGKS16} for example.


\section{Notations and Preliminaries}
Throughout this paper, $I$ represents the identity matrix of appropriate dimension, and  $A^T$ and $\left\| A \right\|_F$ denote the transpose and the Frobenius norm of the matrix $A$, respectively.  For  $A=(a_{ij})\in \R^{p\times m}$ and $B=(b_{ij})\in \R^{n\times q}$, $A \otimes B$ denotes the Kronecker product of  $A$ and $B$, that is, $A \otimes B= (a_{ij} B) \in \R^{pn\times mq}$. The inner product in $\R^{k\times \ell}$ is defined by $\langle A,B \rangle =\tr  (B^T A)$ for $A,B\in \R^{k\times \ell}$ and the induced matrix norm then becomes the Frobenius norm.\\[-4mm]

In the algorithms and codes that follow we will use the adjoint function $f^*$ with respect to the inner product $\langle A,B\rangle = \tr(B^TA)$ of a given linear matrix function $f$  in the form (\ref{1.0}). By definition, the adjoint of a linear function $f$ with respect to any inner product $\langle..,..\rangle$ is the function $f^*$ for which $\langle f(x),y\rangle = \langle x, f^*(y)\rangle$ holds for all $x$ and $y$ in their respective domains. Since $f$ in (\ref{1.0}) is linear in each of its terms it suffices  to find the adjoint of a typical Sylvester summand  $h(X) = A_kXB_k$ in (\ref{1.0}) and of its T-Sylvester counterpart $g(X) =  C_jX^TD_j$ individually. Using elementary properties of the matrix trace function, one can easily derive the identity
\begin{eqnarray*}
\langle h(X),Y\rangle = \langle A_kXB_k,Y\rangle = \tr(Y^TA_kXB_k)  = 
\tr(B_kY^TA_kX) = \langle X, A_k^TYB_k^T\rangle = \langle X,h^*(Y)\rangle .
\end{eqnarray*}
Thus the adjoint function of $h$ is  $h^*(Y) = A_k^TYB_k^T$. Likewise for a T-Sylvester  term of the form $g(X) = C_jX^TD_j$ in (\ref{1.0}) we can again use the cyclic property for two or more  factors such as $\tr(AB) = \tr(BA)$ or $\tr (ABC) = \tr(BCA)$ and the symmetric property $\tr(X^TY) = \tr(XY^T)$ for two factor matrix products. Hence if $g(X) = C_jX^TD_j$, then
\begin{eqnarray*}
\langle g(X),Y \rangle = \langle C_jX^TD_j,Y\rangle = \tr(Y^TC_jX^TD_j) =
\tr(X^TD_jY^TC_j) = \tr(C_j^TYD_j^TX) = \langle X,D_jY^TC_j \rangle ,
\end{eqnarray*} 
making $g^*(Y) = D_jY^TC_j$ the adjoint function of a T-Sylvester term $g(X) = C_jX^TD_j$ in (\ref{1.0}).

\subsection{Proof of the equivalence of Problem (\ref{1.2}) and  Problem (\ref{1.1})}
\begin{proof}
According to \cite{6},  to understand the equivalence of  formulation (\ref{1.2}) and  Problem (\ref{1.1}), we observe that if $E \notin \mathcal{R}(f)$ with $\mathcal{R}(f)$ denoting the  range of  $f$, then any solution of Problem (\ref{1.2}) is a minimizer. Therefore the Karush-Kuhn-Tucker conditions for a feasible solution $X$ of Problem (\ref{1.2}) with  corresponding Lagrange multiplier $\lambda$ are (a) $\tcr{f^*(f(X))} +\lambda_* X = f^*(E)$ with $\lambda_* >0$ and (b) $\lambda_* (\|X\|_F -\Delta) = 0$. Furthermore Problem (\ref{1.2}) is a convex quadratic problem and therefore these two conditions are necessary and sufficient. Equivalence with Problem (\ref{1.1}) follows directly, since a solution $X_*$ of Problem (\ref{1.2}) is also a solution of Problem (\ref{1.1}) for $\zeta = \lambda_*$. Conversely, if $X_{\zeta}$ is a solution of Problem (\ref{1.1}) for a given $\zeta$, then $X_{\zeta}$ solves Problem (\ref{1.2}) for $\Delta = \|X_{\zeta}\|_F$. 
\end{proof}

\section{An Iterative Method to Solve Problem (\ref{1.1}) and its Properties}


Written out {explicitly, our model problem (\ref{1.2}) for  generalized Sylvester equations becomes  
\begin{eqnarray}\label{2.1}
\mathop {\arg\min}\limits_{X\in \R^{m\times n}} \frac{1}{2}\langle {f(X),f(X)} \rangle -\langle {f(X),E} \rangle \ 
\mbox{subject  to} \  \left\| X \right\| \le \Delta,
\end{eqnarray}
where from hereon out we continue to write $\|..\|$ instead of subscripting norms by $..._{F}$ as we will always use the Frobenius norm here .

We now describe our iterative method to solve Problem (\ref{2.1})  in basic detail, as designed to solve Problem (\ref{1.1}).

\noindent
\underline{\hspace*{135mm}}\\[1mm]
{\bf Algorithm 3.1:} \ \ Generalized Sylvester Equation;  Basic Version\\[-2mm]
\underline{\hspace*{135mm}}\\[-2mm] 

\noindent
{\bf{Input :}}  Compatibly sized input matrices for $f(..) = E$ and a positive real number $\Delta $. \\[1mm]
{\bf{Initialize :}} \begin{minipage}[t]{135mm} {Start with $X_0 =0 \in \R^{m\times n}$, $Q_{-1} =0 \in \R^{m\times n}$ and a small given tolerance $\varepsilon >0$. \\ [1mm]
Compute
    $R_0 =-(f^*(E))\  \neq 0$, set $t_0 = R_0$, $\gamma_0 =\left\| {R_0 } \right\|$. Then set \\[1mm]
      $P_0 =-R_0$,  $T_{-1}=[]$ (empty),   $k = 0$, Switch = 0 and Done = 0. }\end{minipage}   \\

\hspace*{-3mm} \underline{{\bf{While}}} Switch = 0 and Done = 0 \textbf{do :}  
 \hspace*{5mm}  (First (interior) branch) \\[-3mm]

\ \textbf{1.1 :} \begin{minipage}[t]{135mm}{Compute $Q_k =t_k /\gamma_k $, $\delta _k =\|f(Q_k)\|^2$, 
 $t_{k+1} =f^*( f(Q_k)) -\delta _k Q_k -\gamma _k Q_{k-1} $,\\[1mm]
$\gamma _{k+1} =\left\| {t_{k+1} } \right\|$, and $    T_k = \begin{bmatrix}T_{k-1}& \Gamma _k \\\Gamma _k ^T  & {\delta _k }\end{bmatrix}$,  where $\Gamma _k =(0,\dots ,0,\gamma _k )^T\in \R^k$.} \end{minipage}\\[-1mm]

\ \textbf{1.2 :} \begin{minipage}[t]{135mm} \textbf{If} $f(P_k) \ne 0$,\\[1mm] 
\hspace*{2mm} {Compute} $ \alpha _k =\left\| {R_k } \right\|^2 /\|f(P_k)\|^2$ 
\hspace*{1mm} and $X_{k+1} =X_k +\alpha _k P_k$;\\[1mm]
 \hspace*{2mm} \textbf{If} $\left\| {X_{k+1} } \right\|\le \Delta $, \\[1mm]
 \hspace*{5mm} \begin{minipage}[t]{136mm}{ {Compute} $R_{k+1}=R_k +\alpha _k (f^*( f(P_k) ))$,\\[1mm] 
  \textbf{If} $\left\| {R_{k+1} } \right\| <\varepsilon $, Done = 1, \textbf{End};
{Set}   $\beta _k =\left\|{R_{k+1}}\right\|^2/\left\| {R_k } \right\|^2$, \\
and $P_{k+1} =-R_{k+1} +\beta _k P_k $; }\end{minipage}\\[1mm]
 \hspace*{3mm}\textbf{Else} Switch = 1; \textbf{End} \\[1mm]
  \textbf{Else} Switch = 1; \textbf{End} \end{minipage} \\[1mm]
\hspace*{5mm}\textbf{1.3 :}    {Set} $k =k+1$.\\[-2mm]

\hspace*{-3mm} \underline{\textbf{End While}} \\[-2mm]

\hspace*{-3.4mm} \underline{\textbf{While}} Switch = 1 and Done = 0 \textbf{do:}
 \hspace*{5mm} (Second (boundary) branch) \\[-2mm]

\ \textbf{2.1 :} \begin{minipage}[t]{135mm} {Compute $Q_k =t_k /\gamma_k $, $\delta _k =\|f(Q_k)\|^2$, \  $t_{k+1} =f^* f(Q_k) -\delta _k Q_k -\gamma _k Q_{k-1} $,\\
$\gamma _{k+1} =\left\| {t_{k+1} } \right\|$, and $    T_k = \begin{bmatrix}T_{k-1}& \Gamma _k \\\Gamma _k ^T  & {\delta _k }\end{bmatrix}$, where $\Gamma _k =(0,\dots ,0,\gamma _k )^T\in \R^k$.}\end{minipage}\\

\ \textbf{2.2 :} \begin{minipage}[t]{135mm} {Find the optimal solution $h_k$ of :
\begin{equation}\label{6}
\hspace*{-8mm}\mathop {\min }\limits_{h\in \R^{k+1}} \frac{1}{2}h^TT_k h+\gamma _0 h^Te_1
\ \mbox{subject to}\  \| h \| \le \Delta
\end{equation}
via Algorithm 3.2.} \end{minipage}\\[2mm]
\hspace*{10mm}\begin{minipage}[t]{135mm}{
 \textbf{If} $\gamma _{k+1} \left| {\langle {e_{k+1} ,h_k } \rangle } \right|<\varepsilon$ (for the $k+1^{st}$ unit vector $e_{k+1}$), \\[1mm]
 \hspace*{0mm} {Set} $\tilde {X}_k =(Q_0 ,Q_1 ,\dots ,Q_k )(h_k \otimes I)$, Done = 1;\\
  \textbf{End}}\end{minipage}\\[2mm]
  
\ \textbf{2.3 :} {Set} $k = k+1$.\\[-3mm]

\underline{\textbf{End While}}\\[2mm] 
{\bf{Output :} } {Solution} matrix  $X_*=X_{k+1}$ (from branch 1) or $\tilde {X}_k$ (from branch 2), iterations counter $k$\\[-2mm]
\underline{\hspace*{135mm}}\\[-4mm]

\begin{remark}\label{Remark10}
If $R_0 =-(f^*(E))= 0$ then $X_*=0$ solves  Problem (\ref{1.2}) according to Theorem \ref{Theorem1} of Section IV below.  Therefore we  only consider the case $R_0\ne 0$ in  all  versions of the algorithm that we consider. 
\end{remark}

The basic iteration  of Algorithm 3.1  involves two branches: 
The first uses the Conjugate Gradient (CG) method in step 1.2 and tries to compute the solution of Problem (\ref{1.2}) inside the feasible region $\{X \mid \|X\| < \Delta\}$, see also \cite{GLRT,9}. 

When Problem (\ref{1.2})  cannot be solved in the feasible region via CG,  we solve  Problem (\ref{6}) instead. In this case  the optimal solution lies on the boundary  $\{X : \|X\| = \Delta \}$   according to  Theorems (\ref{Theorem5}) and (\ref{Theorem4}), and it is obtained by the More-Sorensen algorithm \cite{11}. \\[-4mm]

The flow chart of this algorithm is in Figure 1.\\[-2mm] 
\begin{figure*}[t]
\begin{center}
\begin{tikzpicture}[
  font=\sffamily,
  every matrix/.style={ampersand replacement=\&,column sep=0.5cm,row sep=0.35cm},
  source/.style={draw,thick,rounded corners,fill=yellow!20,inner sep=.15cm},
  process/.style={draw,thick, rectangle,fill=blue!20},
  sink/.style={source,fill=green!20},
  datastore/.style={draw,very thick,shape=datastore,inner sep=.3cm},
  dots/.style={gray,scale=2},
  to/.style={->,>=stealth',shorten >=1pt,semithick,font=\sffamily\footnotesize},
  every node/.style={align=center}]

  \matrix{
     \node[source] (Result2) {$X_* = \tilde {X}_k$};
      \& \node[source] (Step1) {Initialize}; 
      \& \node[source] (Result1) {$X_*=X_{k+1}$}; \\

  \node[process] (Criterion2){Stop Criterion\\$\gamma _{k+1} \left| {\langle {e_{k+1} ,h_k } \rangle } \right|<\varepsilon$};
    \&  
    \&   \node[process] (Criterion1){Stop Criterion\\$\left\| {R_{k+1} } \right\| <\varepsilon $}; \\

    \node[sink] (Step4) {Step 2.2: Solve the Subproblem\\\hspace*{10mm} via More-Sorensen};
      \&     \node[source] (Go) {Done=0}; 
      \& \node[sink] (Step3) {Step 1.2: CG method}; \\

    \&  \node[process] (Check) {Switch = 0 \\ and Done = 0};
    \&   \node[process] (Step33){$f(P_k) \ne 0$  \\ and $\left\| {X_{k+1} } \right\| \le \Delta $}; \\

   \node[sink] (Lanczos1) {Step 2.1: Lanczos method};
      \&      \& \node[sink] (Lanczos2) {Step 1.1: Lanczos method}; \\
  };

  \draw[to,thick] (Step1) --(Go);
  \draw[to,thick] (Go) --(Check);
 
  \draw[to,thick] (Check) -- node[midway,above] {Yes} (Lanczos2);
     \draw[to,thick] (Lanczos2) --  (Step33);
      \draw[to,thick] (Step33) --node[midway,above] {No}node[midway,below] {Switch=1 \qquad }   (Go);
      \draw[to,thick] (Step33) --node[midway,left] {Yes}node[midway,right] {Switch=0}   (Step3);
      
   \draw[to,thick] (Step3) --  (Criterion1);
   \draw[to,thick] (Criterion1) -- node[midway,above] {No} (Go);
   \draw[to,thick] (Criterion1) -- node[midway,left] {Yes} node[midway,right] {Done=1}(Result1);

     \draw[to,thick] (Check) -- node[midway,above] {No} (Lanczos1);
     \draw[to,thick] (Lanczos1) --  (Step4);
     \draw[to,thick] (Step4) --  (Criterion2);
       \draw[to,thick] (Criterion2) -- node[midway,above] {No} (Go);
        \draw[to,thick] (Criterion2) -- node[midway,right] {Yes}node[midway,left] {Done=1} (Result2);

\end{tikzpicture}

\end{center}
\hspace*{55mm}Figure 1:  The flow chart of Algorithm 3.1.
\end{figure*}


Now we  detail how to solve Problem (\ref{6}).  This  will complete Algorithm 3.1. Based on Theorem \ref{Theorem5} of Appendix A we solve Problem (\ref{12}) in  Algorithm 3.2 below to get one solution of Problem (\ref{6}). Our method is based  on the work of  \tcr{J. J. More} and D. C. Sorensen in \cite{11} and executed here in  slightly different form.\\[-3mm]

\noindent
\textbf{Algorithm 3.2} \emph{ Constrained optimization according to More-Sorensen} \cite[p. 419]{11}.\\[-3mm] 

\noindent

\hspace*{2mm}\begin{minipage}{75mm}{\begin{enumerate}
\item[Start :] Given a suitable starting value $\lambda _k^0 $ with $T_k +\lambda _k^0 I$ positive definite and $\Delta >0$.\\[-4mm]
\item[Iterate :] For $i=0,1,\dots $until convergence\\[2mm]
\begin{minipage}{65mm}{ \begin{itemize}
 \item[ (a)] Factor $T_k +\lambda _k^i I=LL^T$, where $L$ is lower bidiagonal.\\[-3mm]
    \item [(b)] Solve $LL^Th=-\gamma _0 e_1$ for $h$. \\[-3mm]
\item  [(c)] Solve $Lw=h$ for $w$.\\[-3mm]
 \item  [(d)] Set $\lambda _k^{i+1} =\lambda _k^{i} +\left( {\frac{\left\| h \right\|-\Delta }{\Delta }} \right)\left( {\frac{\left\| h \right\|}{\| {w} \| }} \right)^2$.\\[-0mm]
    \end{itemize}} \end{minipage}
\end{enumerate}}\end{minipage}

\noindent
In  Algorithm 3.2, the initial secular value $\lambda _k^0 $
can be chosen as follows: If  $\left\| {h_k (\lambda _{k-1} )}
\right\| \ge \Delta $, let $\lambda _k^0 =\lambda _{k-1}$; else let $\lambda
_k^0 =0$, where $\lambda _{k-1} $is obtained by the ($k-$1)th iterative steps of
Algorithm 3.1. The stopping criterion is $\left| {\lambda _k
^{i+1}-\lambda _k ^i} \right|\le \varepsilon $, where $\varepsilon $ is a
small chosen tolerance.\\[2mm]
Since each $T_k +\lambda_k I$ is symmetric and bidiagonal, we can implement Algorithm 3.2  in step (a) and likewise in steps (b) and (c) using only the diagonal and subdiagonal vectors of their lower bidiagonal Cholesky factors $L_k$. 


\section{ Main Results and Improvements of Algorithm 3.1}

In this section, we develop solvability conditions for Problem (\ref{1.2}), equivalent to our original Problem (\ref{1.1}), and show that  Problem (\ref{1.2}) can be solved in finitely many  iterations if we disregard rounding errors and if  subproblem (\ref{6}) can be solved. Then we propose a more effective algorithm than  Algorithm 3.1. First let us recall Problem (\ref{2.1}):
\begin{equation*}
\mathop {\min }\limits_{X\in \R^{m\times n}} \frac{1}{2}\langle {f(X),f(X)} \rangle  -\langle {f(X),E} \rangle 
\ \mbox{subject to}  \left\| X \right\| \le \Delta.
\end{equation*}

\begin{theorem}\label{Theorem1} 
\textbf{(Solvability condition)} The matrix $X_\ast $ is a solution of Problem (\ref{1.2}) if and only if $X_\ast $ is feasible, i.e., $\left\| X_\ast \right\| \le \Delta$, and there is a scalar $\lambda ^\ast \ge 0$ such that
\begin{eqnarray}\label{3}
f^*(f(X_\ast)) + \lambda ^\ast X_\ast =f^*(E),\nonumber \
\text{and } \ \lambda ^\ast \cdot (\left\| {X_\ast } \right\|-\Delta)\ =\ 0. 
\end{eqnarray}
\end{theorem}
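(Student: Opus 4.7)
The plan is to treat Problem~(\ref{1.2}) as a convex quadratic program on the compact convex Frobenius norm ball $\{X \in \R^{m\times n} : \|X\| \le \Delta\}$ and apply the Karush-Kuhn-Tucker (KKT) theorem, whose conditions are both necessary and sufficient in this convex setting. This is essentially a cleaner restatement of the Lagrangian analysis already sketched in Section~II when proving the equivalence of Problems~(\ref{1.1}) and~(\ref{1.2}), so I would reuse that machinery and make the conclusions explicit.

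First, I would expand the objective as $\phi(X) = \tfrac{1}{2}\|f(X)-E\|^2 = \tfrac{1}{2}\langle f(X),f(X)\rangle - \langle f(X),E\rangle + \tfrac{1}{2}\|E\|^2$ and compute its Fr\'echet derivative. Using linearity of $f$ together with the adjoint identity $\langle f(X),Y\rangle = \langle X, f^*(Y)\rangle$ that was set up in Section~II, I obtain $\nabla\phi(X) = f^*(f(X)) - f^*(E)$. The map $\phi$ is convex because $X \mapsto \tfrac12\|f(X)\|^2$ is a non-negative quadratic form in $X$.

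Second, I would form the Lagrangian with the squared-norm form of the constraint to keep everything smooth,
\[
L(X,\lambda) \;=\; \phi(X) + \tfrac{\lambda}{2}\bigl(\|X\|^2 - \Delta^2\bigr),
\]
so that stationarity $\nabla_X L = 0$ reads exactly $f^*(f(X_*)) + \lambda^* X_* = f^*(E)$, which is the first equation of the theorem. Dual feasibility gives $\lambda^* \ge 0$ and complementary slackness gives $\lambda^*(\|X_*\|^2 - \Delta^2) = 0$; because $\|X_*\| + \Delta > 0$ this is equivalent to the stated $\lambda^*(\|X_*\| - \Delta) = 0$.

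Third, for the ``only if'' direction I would invoke Slater's constraint qualification: the feasible set has a strictly interior point ($X = 0$, since $\Delta > 0$), so the KKT conditions hold at every optimum. For the ``if'' direction I would verify sufficiency by hand. Assuming $X_*$ is feasible and satisfies the listed conditions, for any feasible $X$ a direct expansion of the quadratic $\phi$ gives
\[
\phi(X) - \phi(X_*) \;=\; \langle \nabla\phi(X_*),\, X - X_*\rangle + \tfrac{1}{2}\|f(X - X_*)\|^2 \;=\; -\lambda^*\langle X_*, X - X_*\rangle + \tfrac{1}{2}\|f(X - X_*)\|^2,
\]
and the polarization identity $2\langle X_*, X - X_*\rangle = \|X\|^2 - \|X_*\|^2 - \|X - X_*\|^2$, combined with complementary slackness and $\|X\| \le \Delta$, shows $\phi(X) \ge \phi(X_*)$. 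The main obstacle I expect is the case split in this last step: when $\lambda^* = 0$ the inequality collapses to $\tfrac12\|f(X-X_*)\|^2 \ge 0$, while when $\lambda^* > 0$ one needs the implication $\|X_*\| = \Delta \ge \|X\|$ obtained from complementary slackness so that $\|X_*\|^2 - \|X\|^2 \ge 0$. Once that split is in place, every surviving term is manifestly non-negative and the remaining steps are routine convex-quadratic bookkeeping using the adjoint $f^*$ already computed in Section~II.
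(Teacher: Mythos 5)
Your proposal is correct, and its sufficiency half is essentially the paper's own argument in different clothing: the paper introduces the augmented functional $\hat\varphi(X)=\varphi(X)+\tfrac12\lambda^*\langle X,X\rangle$ and expands $\hat\varphi(X_*+W)$, which produces exactly the terms $\langle W,\nabla\varphi(X_*)+\lambda^*X_*\rangle+\tfrac12\|f(W)\|^2+\tfrac12\lambda^*\|W\|^2$ that your polarization identity yields, and then uses complementary slackness in the same way to restrict to the ball. Where you genuinely diverge is the necessity direction: you invoke Slater's constraint qualification (strict feasibility of $X=0$) together with the standard KKT theorem for convex programs, which delivers stationarity, $\lambda^*\ge 0$, and complementary slackness in one stroke. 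The paper instead gives a self-contained two-case argument: if $\|X_*\|<\Delta$ it reads off the unconstrained stationarity condition with $\lambda^*=0$; if $\|X_*\|=\Delta$ it applies the Lagrange condition for the equality-constrained problem and then proves $\lambda^*\ge 0$ by contradiction, showing that a negative multiplier would force $X_*$ to be a global unconstrained minimizer and hence admit $\lambda^*=0$ after all. Your route is shorter and leans on a standard theorem (which the paper itself gestures at in Section~II when proving the equivalence of Problems~(\ref{1.1}) and~(\ref{1.2})); the paper's route buys self-containedness and avoids citing constraint-qualification machinery. Both are valid; if you adopt the Slater/KKT citation you should state explicitly that the smooth reformulation $\tfrac12(\|X\|^2-\Delta^2)\le 0$ is the constraint to which the theorem is applied, as you already do.
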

\noindent

Next we show that  problem (\ref{1.2}) can be solved in
finitely many steps in the absence of rounding errors. When the algorithm does not enter the second branch, Remark \ref{Remark2} of Appendix A tells us that our algorithm has found a solution after finitely many iterations in its first branch.
If the algorithm, however, switches to its second branch we only need to show that the second branch stopping criterion will then be satisfied after finitely many steps. The actual proof of this is as follows.

\begin{theorem}\label{Theorem3}
Suppose that the sequences $\{X_k \}$, $\{R_k \}$ are  generated by
Algorithm 3.1. Then the following equation holds for all $k=0,1,2,\dots $.
\[
f^*(f(X_k)) - f^*(E)=R_k .
\]
\end{theorem}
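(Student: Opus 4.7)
The plan is a straightforward induction on $k$, leveraging the linearity of $f$ (and hence $f^*$) together with the explicit update rules in branch 1.2 of Algorithm~3.1.

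For the base case $k=0$, I would simply read off the initialization: $X_0 = 0$ and $R_0 = -f^*(E)$. Since $f$ is linear, $f(X_0) = f(0) = 0$, so $f^*(f(X_0)) - f^*(E) = -f^*(E) = R_0$, as required.

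For the inductive step, assume $f^*(f(X_k)) - f^*(E) = R_k$. I would read off the updates from step 1.2:
\begin{equation*}
X_{k+1} = X_k + \alpha_k P_k, \qquad R_{k+1} = R_k + \alpha_k \, f^*(f(P_k)).
\end{equation*}
Applying $f$ to the first identity and then $f^*$ (both linear), I get
\begin{equation*}
f^*(f(X_{k+1})) = f^*(f(X_k)) + \alpha_k \, f^*(f(P_k)).
\end{equation*}
Subtracting $f^*(E)$ from both sides and using the inductive hypothesis yields
\begin{equation*}
f^*(f(X_{k+1})) - f^*(E) = R_k + \alpha_k \, f^*(f(P_k)) = R_{k+1},
\end{equation*}
which closes the induction.

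One small bookkeeping point worth noting, rather than a genuine obstacle, is that the recursion $X_{k+1} = X_k + \alpha_k P_k$ is executed only inside branch 1 of Algorithm~3.1, so the claim is really a statement about the iterates produced before (or without) a switch to branch 2; once Switch $= 1$, the candidate solution $\tilde X_k$ is produced by a different rule and need not satisfy the identity. I would make this scope explicit at the start of the proof so that the induction is applied only where the updates for $X_k$ and $R_k$ are both in force. Beyond that, the argument is purely algebraic and requires only that $f$, and consequently its adjoint $f^*$ (as derived in Section~II), are linear.
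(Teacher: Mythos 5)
Your proof is correct and follows essentially the same route as the paper's: induction on $k$ using the branch-1 updates $X_{k+1}=X_k+\alpha_k P_k$, $R_{k+1}=R_k+\alpha_k f^*(f(P_k))$ and the linearity of $f$ and $f^*$. The scoping remark about the identity holding only for the branch-1 iterates is a sensible clarification that the paper leaves implicit, but it does not change the argument.
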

\begin{proof} We use  induction. For $k=0$ the conclusion holds. Assume that the
conclusion holds for $k-1$. Then
$\hspace*{20mm}f^*(f(X_k)) -f^*(E) =f^*(f(X_{k-1} +\alpha _{k-1} P_{k-1} )) -f^*(E)=$\\
$\hspace*{20mm}=f^*(f(X_{k-1})) -f^*(E) + \alpha _{k-1} f^*(f(P_{k-1})) = R_{k-1} +\alpha _{k-1}  f^*(f(P_{k-1})) = R_k.$ \\[-5mm]
 \end{proof}

\begin{remark}\label{RemarkA}
 Combining Remark \ref{Remark2} and Theorem \ref{Theorem3} we see that 
\[f^*(f(X_{k+1})) -f^*(E)=0.
\]
Hence according to Theorem \ref{Theorem1},  $\lambda ^\ast =0$ and $X_{k+1} $ solves Problem (\ref{1.2}).\\[-4mm]  
\end{remark}

\begin{lemma}\label{Lemma4} \cite[Lemma 4]{XXP15}
Suppose that the sequences $\{Q_k \}$, $\{\gamma _k \}$ \text{and } $\{h_k
\}$ are generated by Algorithm 3.1. Let
\small{
\[
\tilde {X}_k =Q_0 h_k^0 +Q_1 h_k^1 +...+Q_k h_k^k \ \ and \ h_k =(h_k^0 ,h_k^1 ,\dots ,h_k^k )^T.
\] }
Then for all $k=0,1,2,\dots $, there exists a nonnegative number $\lambda _k $ such that
\begin{eqnarray}\label{10}
\ \ f^*(f(\tilde {X}_k)) +\lambda _k \tilde {X}_k -f^*(E) = Q_{k+1} \gamma _{k+1} h_k^k,  \quad   \nonumber
\lambda _k \cdot (\left\| {\tilde {X}_k } \right\|-\Delta )=0,  \ {and}  \ \left\| {\tilde {X}_k } \right\| \le \Delta. \ \
\end{eqnarray}
\end{lemma}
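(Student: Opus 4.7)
The plan is to exploit the fact that Algorithm 3.1, steps 2.1 and 2.2, is just a Lanczos process applied to the self-adjoint operator $f^*\!\circ f$ on $\R^{m\times n}$ (with the Frobenius inner product), combined with the More--Sorensen solver for the resulting tridiagonal trust-region subproblem. So everything will follow from the standard Lanczos three-term recurrence together with the KKT conditions for subproblem (\ref{6}).

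First I would rewrite the recursion in step 2.1. The lines $t_{k+1} = f^*(f(Q_k)) - \delta_k Q_k - \gamma_k Q_{k-1}$ and $Q_{k+1} = t_{k+1}/\gamma_{k+1}$ give, for every $j \ge 0$,
\[
f^*(f(Q_j)) \;=\; \gamma_{j+1} Q_{j+1} + \delta_j Q_j + \gamma_j Q_{j-1},
\]
with the convention $Q_{-1}=0$. By a standard induction (using that $f^*\!\circ f$ is self-adjoint and that the construction is exactly Lanczos starting from $Q_0 = R_0/\gamma_0$), the $Q_0,Q_1,\dots,Q_{k+1}$ are orthonormal with respect to the Frobenius inner product; this I would check briefly but not grind. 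Introducing the formal ``matrix'' $\mathcal Q_k$ that sends a coefficient vector $v=(v_0,\dots,v_k)^T\in\R^{k+1}$ to $\sum_{j=0}^k v_j Q_j$, the recurrence reads compactly
\[
f^*(f(\mathcal Q_k v)) \;=\; \mathcal Q_k(T_k v) + \gamma_{k+1}\,v_k\, Q_{k+1},
\]
where $T_k$ is exactly the tridiagonal matrix built in step 2.1. Applying this to $v = h_k$ gives
\[
f^*(f(\tilde X_k)) \;=\; \mathcal Q_k(T_k h_k) + \gamma_{k+1}\, h_k^k\, Q_{k+1}.
\]

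Next I would invoke the KKT conditions for the trust-region subproblem (\ref{6}) that defines $h_k$ in step 2.2. Since (\ref{6}) is the convex quadratic
$\tfrac12 h^T T_k h + \gamma_0 h^T e_1$ on $\|h\|\le \Delta$, its optimal $h_k$ and associated multiplier $\lambda_k\ge 0$ satisfy
\[
T_k h_k + \gamma_0 e_1 + \lambda_k h_k = 0, \qquad \lambda_k(\|h_k\|-\Delta)=0, \qquad \|h_k\|\le \Delta,
\]
and More--Sorensen in Algorithm 3.2 produces precisely such a pair. Applying $\mathcal Q_k$ to the stationarity equation and using $\mathcal Q_k(e_1) = Q_0$, together with the initialization identity $\gamma_0 Q_0 = \gamma_0 \cdot R_0/\gamma_0 = R_0 = -f^*(E)$, yields
\[
\mathcal Q_k(T_k h_k) \;=\; -\gamma_0 Q_0 - \lambda_k \tilde X_k \;=\; f^*(E) - \lambda_k \tilde X_k.
\]
Plugging this into the earlier formula for $f^*(f(\tilde X_k))$ gives the first equation of (\ref{10}). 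The complementarity and feasibility parts transfer directly: orthonormality of the $Q_j$ implies $\|\tilde X_k\|_F = \|h_k\|_2$, so $\lambda_k(\|\tilde X_k\|-\Delta)=0$ and $\|\tilde X_k\|\le \Delta$ follow from the corresponding KKT conditions on $h_k$.

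The genuinely new ingredient over plain Lanczos-plus-CG is just the bookkeeping that turns the scalar KKT system on $\R^{k+1}$ into the matrix-valued identity on $\R^{m\times n}$ via $\mathcal Q_k$; the main place where care is needed is the two boundary terms in the Lanczos reindexing (the $Q_{-1}$ term at $j=0$ and the overflow $\gamma_{k+1} Q_{k+1}$ at $j=k$), since it is the latter that survives and produces the right-hand side $Q_{k+1}\gamma_{k+1}h_k^k$. Once that is done cleanly, everything reduces to substitution, so I do not expect any other obstacle; the orthonormality of the $Q_j$ is the one fact I would verify with care, though for the algebraic identity (\ref{10}) itself it is actually only needed to pass from $\|h_k\|$ to $\|\tilde X_k\|$ in the last two clauses.
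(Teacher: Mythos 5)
Your proposal is correct and follows essentially the same route as the paper: the paper likewise combines the Lanczos three-term recurrence written in aggregate form, $(f^*(f(Q_0)),\dots,f^*(f(Q_k)))=(Q_0,\dots,Q_k)(T_k\otimes I)+(0,\dots,0,\gamma_{k+1}Q_{k+1})$, with the KKT system $(T_k+\lambda_k I)h_k=-\gamma_0 e_1$ for subproblem (\ref{6}), and then uses the orthonormality of the $Q_j$ (Lemma \ref{Lemma2}) to identify $\|\tilde X_k\|$ with $\|h_k\|_2$. Your operator $\mathcal Q_k$ is just a cleaner notation for the paper's Kronecker-product bookkeeping, and your sign check via $\gamma_0 Q_0=R_0=-f^*(E)$ matches the paper's computation exactly.
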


\begin{theorem}\label{Theorem4} \cite[Theorem 4]{XXP15}
Suppose that $\gamma _0 ,\gamma _1 ,\dots ,\gamma _k \ne 0$
and $\gamma _{k+1} =0$.\\
 Then $\tilde {X}_k =Q_0 h_k^0 +Q_1 h_k^1 +...+Q_k
h_k^k$ is the solution of Problem (\ref{1.2}).
\end{theorem}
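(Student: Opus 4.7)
The plan is to combine Lemma \ref{Lemma4} with the solvability condition of Theorem \ref{Theorem1}, which is essentially the KKT system for Problem (\ref{1.2}). Lemma \ref{Lemma4} already packages three of the four things we need: the existence of a nonnegative multiplier $\lambda_k$, the complementarity condition $\lambda_k(\|\tilde{X}_k\|-\Delta)=0$, and primal feasibility $\|\tilde{X}_k\|\le\Delta$. Only the stationarity equation fails to hold exactly, and the residual obstructing it is precisely $Q_{k+1}\gamma_{k+1}h_k^k$.

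First I would invoke Lemma \ref{Lemma4} for the current iterate $k$ to obtain a nonnegative $\lambda_k\ge 0$ satisfying
\[
f^*(f(\tilde{X}_k)) + \lambda_k \tilde{X}_k - f^*(E) \;=\; Q_{k+1}\gamma_{k+1}h_k^k,
\qquad \lambda_k(\|\tilde{X}_k\|-\Delta)=0, \qquad \|\tilde{X}_k\|\le\Delta .
\]
Next I would substitute the hypothesis $\gamma_{k+1}=0$ directly into the right-hand side above, which kills the obstruction term and yields the exact stationarity identity
\[
f^*(f(\tilde{X}_k)) + \lambda_k \tilde{X}_k \;=\; f^*(E).
\]
Notice that this step does not require knowing anything about $h_k^k$ or about $Q_{k+1}$; the factor $\gamma_{k+1}$ alone suffices to annihilate the term. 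The hypothesis $\gamma_0,\dots,\gamma_k\ne 0$ is used only implicitly, to guarantee that Algorithm 3.1 has genuinely produced the Lanczos vectors $Q_0,\dots,Q_k$ and the minimizer $h_k$ of the trust-region subproblem (\ref{6}), so that $\tilde{X}_k$ is well defined and Lemma \ref{Lemma4} is applicable up through index $k$.

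Finally I would verify the matching of the two KKT systems: the triple $(\tilde{X}_k,\lambda_k)$ satisfies exactly the feasibility condition, the stationarity equation, the complementarity condition, and the sign condition $\lambda_k\ge 0$ appearing in Theorem \ref{Theorem1}. By the ``if'' direction of that theorem, $\tilde{X}_k$ is therefore a solution of Problem (\ref{1.2}), which is the desired conclusion. I do not anticipate any real obstacle here; the work has already been done in Lemma \ref{Lemma4} and Theorem \ref{Theorem1}, and the only thing to watch is the clean substitution $\gamma_{k+1}=0$ and the identification of the Lagrange multiplier $\lambda_k$ supplied by the lemma with the $\lambda^*$ required by the solvability condition.
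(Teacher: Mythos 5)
Your proposal is correct and follows the paper's own argument exactly: invoke Lemma \ref{Lemma4}, observe that $\gamma_{k+1}=0$ annihilates the residual term $Q_{k+1}\gamma_{k+1}h_k^k$, and conclude via the sufficiency direction of Theorem \ref{Theorem1}. The only difference is that you spell out the role of the hypothesis $\gamma_0,\dots,\gamma_k\ne 0$ (well-definedness of the Lanczos quantities), which the paper leaves implicit.
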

\begin{proof} 
Since $\gamma _{k+1} =0$ and $\tilde {X}_k =Q_0 h_k^0 +Q_1 h_k^1
+...+Q_k h_k^k $, by Lemma \ref{Lemma4}, we have  
\begin{equation*}
\begin{array}{c}
f^*(f(\tilde {X}_k))+\lambda _k \tilde {X}_k =f^*(E), 
 \lambda _k \cdot  (\| {\tilde {X}_k } \|-\Delta )=0\ \ \text{and}\ \ \| {\tilde {X}_k } \| \le \Delta,
 \end{array}
\end{equation*}
for some $\lambda_k\geq 0$, which imply that $\tilde {X}_k $ solves Problem (\ref{1.2}) according to Theorem \ref{Theorem1}.  
\end{proof}

\begin{remark}\label{Remark3}
Based on Lemma \ref{Lemma2}, the matrices $Q_0 ,Q_1 ,Q_2 ,\dots \in \R^{m\times n} $ are mutually orthogonal.
Hence there exists a positive number $k\le m \cdot n$ such that $Q_k
=0$. Clearly $t_k =\gamma _k Q_k =0$ implies that  $\gamma _k =\sqrt {\langle {t_k
,t_k } \rangle } =0$. Therefore the second
stopping criterion of the algorithm will be satisfied after  finitely many iterations except for rounding errors.
\end{remark}

\begin{remark}\label{Remark4}
\emph{\textbf{(Convergence)}} According to Remarks \ref{Remark2} and \ref{Remark3}, when disregarding  rounding errors a solution is obtained in at most $m \cdot n$ iterations by Algorithm 3.1 when using its first branch exclusively or when using both branches in conjunction.
\end{remark}

According to Theorem \ref{Theorem2}, Algorithm 3.1 can be shortened as  follows.\\[-6mm]

\noindent
\underline{\hspace*{115mm}}\\[1mm]
{\bf Algorithm 4.1:} \ \ Generalized Sylvester Equation; Simplified Version \quad \\[-2mm]
\underline{\hspace*{115mm}}\\[-2mm] 

\noindent
{\bf{Input :}}  Compatibly sized matrices  $A, ...,   E$ and a positive real number $\Delta $. \\[1mm]
{\bf{Initialize :}} \begin{minipage}[t]{115mm} {Start with $X_0 =0$, $Q_{-1} =O_{m,n}$ and a small given tolerance $\varepsilon >0$.\\ 
Compute
    $R_0 =-f^*(E)\  \neq 0$, $t_0 =R_0$, $\gamma _0 =\left\| {R_0 } \right\|$,     $P_0 =-R_0 $,\\
 Set     $T_{-1} =\text{ 'empty'}$ and  $k= 0$, Switch = 0 and Done = 0. }\end{minipage}   
\\

\hspace*{-2mm}\underline{{\bf{While}}} \ Switch = 0 and Done = 0 \textbf{do:} 
\hspace*{5mm} (Interior optimum search)  \\[-3mm]

 \textbf{1.1 :} \ \begin{minipage}[t]{115mm}  \textbf{If}  $f(P_k) \ne 0$, \\[1mm]
\hspace*{1mm}  \begin{minipage}[t]{115mm}{ {Compute} $ \alpha _k =\left\| {R_k } \right\|^2 /\|f(P_k)\|^2$, $Q_k  = (-1)^kR_k /\| R_k \|$,\\[1mm]
 $R_{k+1} =R_k +\alpha _k f^*(f(P_k))$,
  $X_{k+1} = X_k +\alpha _k P_k$,\\[1mm] 
  \textbf{If} $\left\| {X_{k+1} } \right\| \le \Delta $, \\[1mm]
  \hspace*{5mm}\textbf{If} $\left\| {R_{k+1} } \right\| <\varepsilon $, Done=1,    \textbf{End};  \\[1mm]
  \textbf{Else} Switch = 1; \textbf{End}; \\[1mm]
 \small{
 $\beta _k =\left\| {R_{k+1} } \right\|^2/\left\| {R_k } \right\|^2$, $P_{k+1} =-R_{k+1} +\beta _k P_k $,\\[1mm]
$\delta _k = \begin{cases}
\frac 1{\alpha _k}\hspace*{17mm}  if \ k=0\\
\frac 1{\alpha _k}+ \frac {\beta _{k-1} }{\alpha _{k-1}}  \quad if\ k>0
\end{cases}$,
$\gamma _{k+1} =\sqrt {\beta _k } /\alpha _k,$\\[1mm]
$T_k = \begin{bmatrix}T_{k-1}& \Gamma _k \\\Gamma _k ^T  & {\delta _k }\end{bmatrix}$ for
 $\Gamma _k =(0,\dots ,0,\gamma _k )^T\in \R^k$.}
  }\end{minipage}\\[1mm]
  \textbf{Else} Switch = 1;\\[1mm] \textbf{End}; \end{minipage} \\[0mm]
  
\ \textbf{1.2 :}     {Set} $k =k+1$.\\[-2mm]

\hspace*{-2mm}\underline{\textbf{End While}} \\[-3mm]

\hspace*{-2mm}\underline{\textbf{While}} Switch = 1 and Done = 0 \textbf{do:} 
\hspace*{5mm} (boundary optimum search) \\[2mm]
\hspace*{5mm}\textbf{2.1 :} \begin{minipage}[t]{115mm} {Compute $Q_k =t_k /\gamma_k $ ($\mbox{set first } Q_k =(-1)^kR_k /\left\| {R_k } \right\|$), \\[1mm]
$\delta _k =\|f(Q_k)\|^2$,  
 $t_{k+1} =f^*(f(Q_k)) - \delta _k Q_k -\gamma _k Q_{k-1} $,\\[1mm]
$\gamma _{k+1} =\left\| {t_{k+1} } \right\|$,  $    T_k = \begin{bmatrix}T_{k-1}& \Gamma _k \\\Gamma _k ^T  & {\delta _k }\end{bmatrix}$ for $\Gamma _k =(0,\dots ,0,\gamma _k )^T\in \R^k$.}\end{minipage} \\[-2mm]

\ \textbf{2.2 :} \begin{minipage}[t]{115mm} {Use Algorithm 2.2 to compute the solution $h_k $ of Problem (\ref{12}),\\[1mm]
 \textbf{If} $\gamma _{k+1} \left| {\langle {e_{k+1} ,h_k } \rangle } \right|<\varepsilon\ $ (here  $e_{k+1} \in \R^{k+1}$ is the $k+1^{st}$ unit vector)\\[1mm]
 \hspace*{0mm} {Set} $\tilde {X}_k =(Q_0 ,Q_1 ,\dots ,Q_k )(h_k \otimes I)$, Done = 1;\\[1mm]
\textbf{End};}\end{minipage}\\[2mm]

\ \textbf{2.3 :}     {Set} $k = k+1$.\\[-2mm]

\underline{\textbf{End While}}\\[2mm] 
{\bf{Output :} } {Solution} matrix  $X_*=X_{k+1}$ or $\tilde {X}_k$, iterations counter $k$.\\[-2mm]
\underline{\hspace*{115mm}}\\[-3mm]

\noindent
The flow chart for the simpler and faster version in Algorithm 4.1 is in Figure 2.\\[-3mm] 

\begin{figure*}[t]
\begin{center}
\begin{tikzpicture}[
  font=\sffamily,
  every matrix/.style={ampersand replacement=\&,column sep=0.5cm,row sep=0.5cm},
  source/.style={draw,thick,rounded corners,fill=yellow!20,inner sep=.15cm},
  process/.style={draw,thick, rectangle,fill=blue!20},
  sink/.style={source,fill=green!20},
  datastore/.style={draw,very thick,shape=datastore,inner sep=.4cm},
  dots/.style={gray,scale=2},
  to/.style={->,>=stealth',shorten >=1pt,semithick,font=\sffamily\footnotesize},
  every node/.style={align=center}]

  \matrix{
     \node[source] (Result2) {$X_*=\tilde {X}_k$ };
      \&  \node[source] (Step1) {Initialize};
      \& \node[source] (Result1) {$X_*=X_{k+1}$}; \\

  \node[process] (Criterion2){Stop Criterion\\$\gamma _{k+1} \left| {\langle {e_{k+1} ,h_k } \rangle } \right|<\varepsilon$};
    \&  
    \&   \node[process] (Criterion1){Stop Criterion\\$\left\| {R_{k+1} } \right\|<\varepsilon $}; \\

    \node[sink] (Step4) {Step 2.2: Solve the Subproblem\\ \hspace*{10mm} via More-Sorensen};
      \&     \node[source] (Go) {Done=0}; 
      \& \node[sink] (Step3) {Step 1.1: CG method \\  (Invisible Lanczos method)}; \\

       \node[sink] (Lanczos1) {Step 2.1: Lanczos method};
    \&  \node[process] (Check) {Switch = 0 \\ and Done = 0};
    \&   \node[process] (Step33){$f(P_k) \ne 0$  \\ and $\left\| {X_{k+1} } \right\|\le \Delta $}; \\
  };

  \draw[to,thick] (Step1) --(Go);
  \draw[to,thick] (Go) --(Check);
 
  \draw[to,thick] (Check) -- node[midway,above] {Yes} (Step33);
      \draw[to,thick] (Step33) --node[midway,above] {No}node[midway,below] {Switch=1 \qquad}   (Go);
      \draw[to,thick] (Step33) --node[midway,left] {Yes}node[midway,right] {Switch=0}   (Step3);
      
   \draw[to,thick] (Step3) --  (Criterion1);
   \draw[to,thick] (Criterion1) -- node[midway,above] {No} (Go);
   \draw[to,thick] (Criterion1) -- node[midway,left] {Yes} node[midway,right] {Done=1}(Result1);

     \draw[to,thick] (Check) -- node[midway,above] {No} (Lanczos1);
     \draw[to,thick] (Lanczos1) --  (Step4);
     \draw[to,thick] (Step4) --  (Criterion2);
       \draw[to,thick] (Criterion2) -- node[midway,above] {No} (Go);
        \draw[to,thick] (Criterion2) -- node[midway,right] {Yes}node[midway,left] {Done=1} (Result2);

\end{tikzpicture}
\end{center}
\hspace*{55mm}Figure 2:  The flow chart of Algorithm 4.1.
\end{figure*}

\section{ Applications, Numerical Tests and Comparisons }

We have adapted our algorithm to solve nine Sylvester and T-Sylvester type inhomogeneous linear matrix equations $f(X) = E \neq 0$ that fall into 4 different classes in Table 1. Note that we also adapt a fast $f$ and $f^*$ matrix product implementation code in a model that solves structured sparse Sylvester equations  for multiband image fusion via the equation  $\mathbf {\tcr{AX + XD= E}}$ in {\tt fastmult\_SGLTR\_3i\_ADE.m}.
 \\

\begin{table*}[t]\small
\hspace*{29mm}\begin{tabular}{lcr}
\multicolumn{3}{c}{1-term Sylvester like equations :}\\[1mm]
{General 1-term equation}&{\bf AXB = E}&SGLTR\_1t\_ABE.m\\[-2mm]
\multicolumn{3}{c}{\underline{\hspace*{40mm}}}\\[1mm]
\multicolumn{3}{c}{2-term Sylvester equations :}\\[1mm]
{Classical equation}&{\bf AX + XD = E}&SGLTR\_3i\_ADE.m\\[1mm]
Generalized equation&{\bf AXB  + CXD = E}&SGLTR\_5i\_ABCDE.m\\[1mm]
Stein equation&{\bf AXB + X = E}&St\_SGLTR\_ABE.m\\[-2mm]
\multicolumn{3}{c}{\underline{\hspace*{40mm}}}\\[1mm]
\multicolumn{3}{c}{T-Sylvester equations :}\\[1mm]
{Classical T-equation}&{$\mathbf {AX + X^TD= E}$}&T\_SGLTR\_3i\_ADE.m\\[1mm]
Generalized T-equation&{$\mathbf {AXB  + CX^TD = E}$}&T\_SGLTR\_5i\_ABCDE.m\\[1mm]
Stein T-equation&{$\mathbf {AXB + X^T = E}$}&TSt\_SGLTR\_3i\_ABE.m\\[-2mm]
\multicolumn{3}{c}{\underline{\hspace*{40mm}}}\\[1mm]
\multicolumn{3}{c}{Lyapunov equations :}\\[1mm]
{Discrete version}&{$\mathbf {AXA^T - X= E}$}&dLyap\_SGLTR\_AE.m\\[1mm]
Continuous version&{$\mathbf {AX  + XA^T = E}$}&cLyap\_SGLTR\_AE.m\\[-2mm]
\multicolumn{3}{c}{\underline{\hspace*{40mm}}}\\[1mm]
\end{tabular}\\[-0mm]
\hspace*{32mm}Table 1:  Nine Sylvester and T-Sylvester type matrix equations in four classes.
\end{table*}


\noindent
The MATLAB m-files in the above list differ in just ten entry lines where the respective equation defining linear functions $f(X)$ and their  adjoints $f^*(...)$ have been adjusted for each of the ten different linear matrix function $f$. All ten program codes have been tested and they are available on-line at \cite{UXSE2016}. Several detailed numerical examples follow below.

\subsection{Small Random Coefficient Matrix Case.}
Our first test uses  small and simple random entry matrices to show that Algorithm 4.1 finds the unique norm bounded solution $X$ of the Sylvester equation $AXB + CXD = E \neq 0$ precisely except for rounding errors. For a given set of size compatible random entry matrices $A = 2 \cdot randn(7,5), B = 4\cdot randn(5,6), C = -3 \cdot rand(7,5)$ and $D = 2 \cdot randn(5,6)$ we construct a random integer entry matrix $X = floor(10 \cdot randn(5,5))$ and compute $E = A \cdot X \cdot B + C \cdot X \cdot D \ne 0$. Then we call our Matlab function {\tt SGLTR\_5i\_ABCDE.m} with inputs $A, B, C, D, E, \Delta$, and $err$ and compute the solution $X_* \approx X$ as follows where we vary $\Delta$  from below $\|X\| = 43.9659$ to exceeding $\|X\|$.\\[-4mm]

\begin{figure}
\begin{center}
\hspace*{-4mm}\includegraphics[width=95mm,height=95mm]{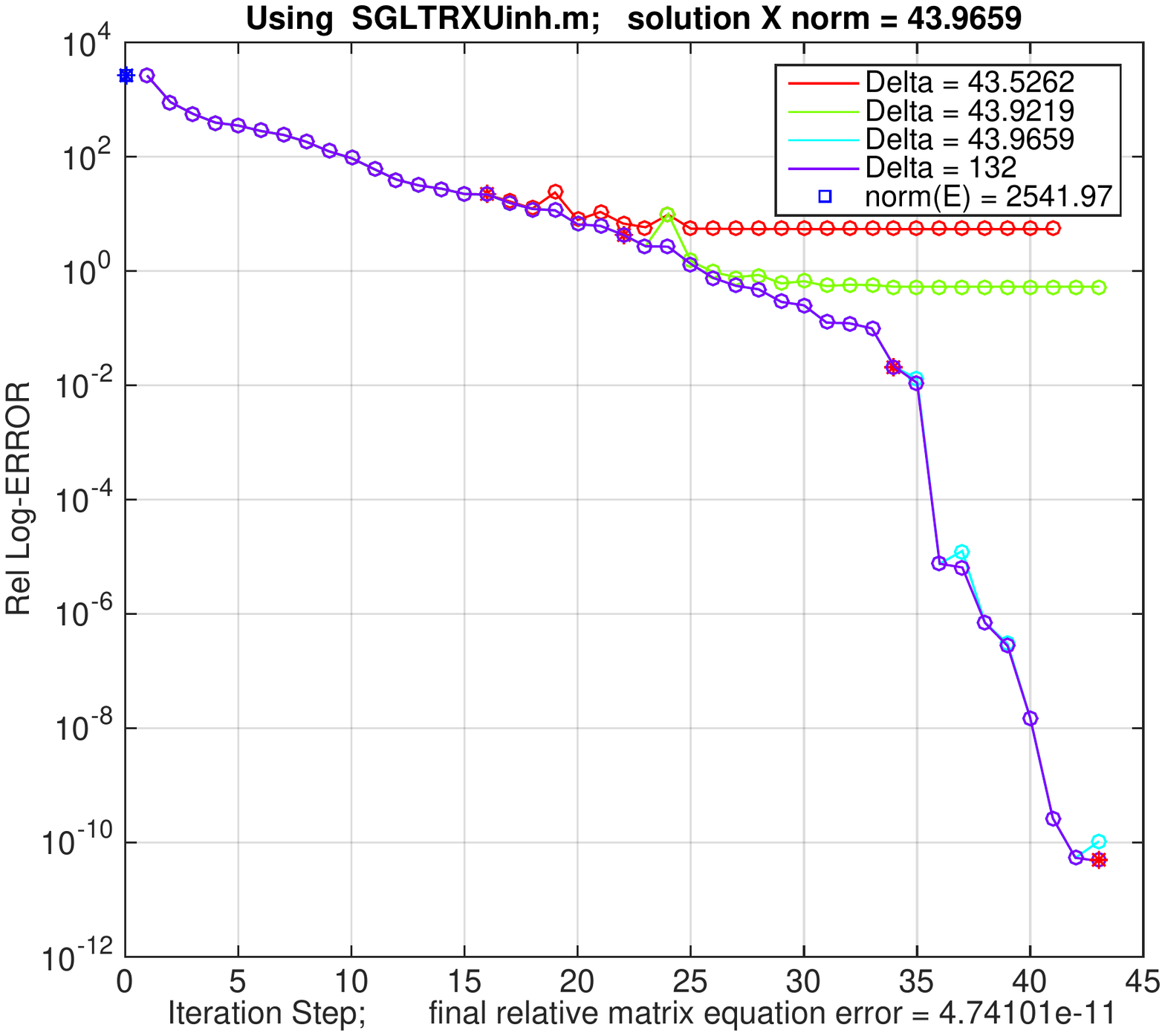}\\[-36mm]
Figure 3
\end{center}
\end{figure}

\vspace*{2mm}
For all  $\Delta \ge \|X\|$ the prescribed solution $X$ is retrieved with small inaccuracies  in the last 2 or 3 digits of Matlab's sixteen and for $\Delta = 0.99 \cdot \|X\|$ 
 and $0.999 \cdot \|X\|$ the all integer entries of the solution matrix $X$  become increasingly recognizable in the computed $X_*$. Note further that in this example the algorithm takes between 30 and 43 iterations which exceed the theoretical convergence bound of $ m \cdot n = 5 \cdot 5$.
 \\[-5mm]

Our second example expands on the first. Here we perturb the fixed right hand side matrix $E$ of the Sylvester equation for the same random entry matrices $A$ through $D$ and now try to solve $A\cdot X \cdot B + C \cdot X \cdot D = E_{pp}$ where $E_{pp} = E + E_p$ with $E$ as before. Here $E_p$ is a small perturbation random entry matrix with $\|E_p\| = \|X\|/10$. In this example the norm of its solution $X_{*p}$ must differ from the earlier solution $X_*$ of the previous unperturbed example. By  construction, the perturbed equation is unsolvable. Since  $A\cdot X_* \cdot B + C \cdot X_* \cdot D - E \approx 0$, the residual error $\|A\cdot X_{*p} \cdot B + C \cdot X_{*p} \cdot D - E_{pp}\|$ of the perturbed Sylvester equation can  at most equal the right hand side perturbation of size $\|E_p\|$. This is borne out in the following graph in which the  horizontal line is drawn at $\|E_p\| = 4.3966$ and the final relative matrix error of the optimal solution $X_{*p}$ of the perturbed system with  $\|X_{*p} \| \le \Delta$ has the size 2.8259 which is well below $\|E_p\|$ once $ \Delta $ is chosen to exceed $\|X\|$, see the annotations of Figure 4.\\[-4mm]

\begin{figure}
\begin{center}\vspace*{-12mm}
\hspace*{-4mm}\includegraphics[width=90mm,height=95mm]{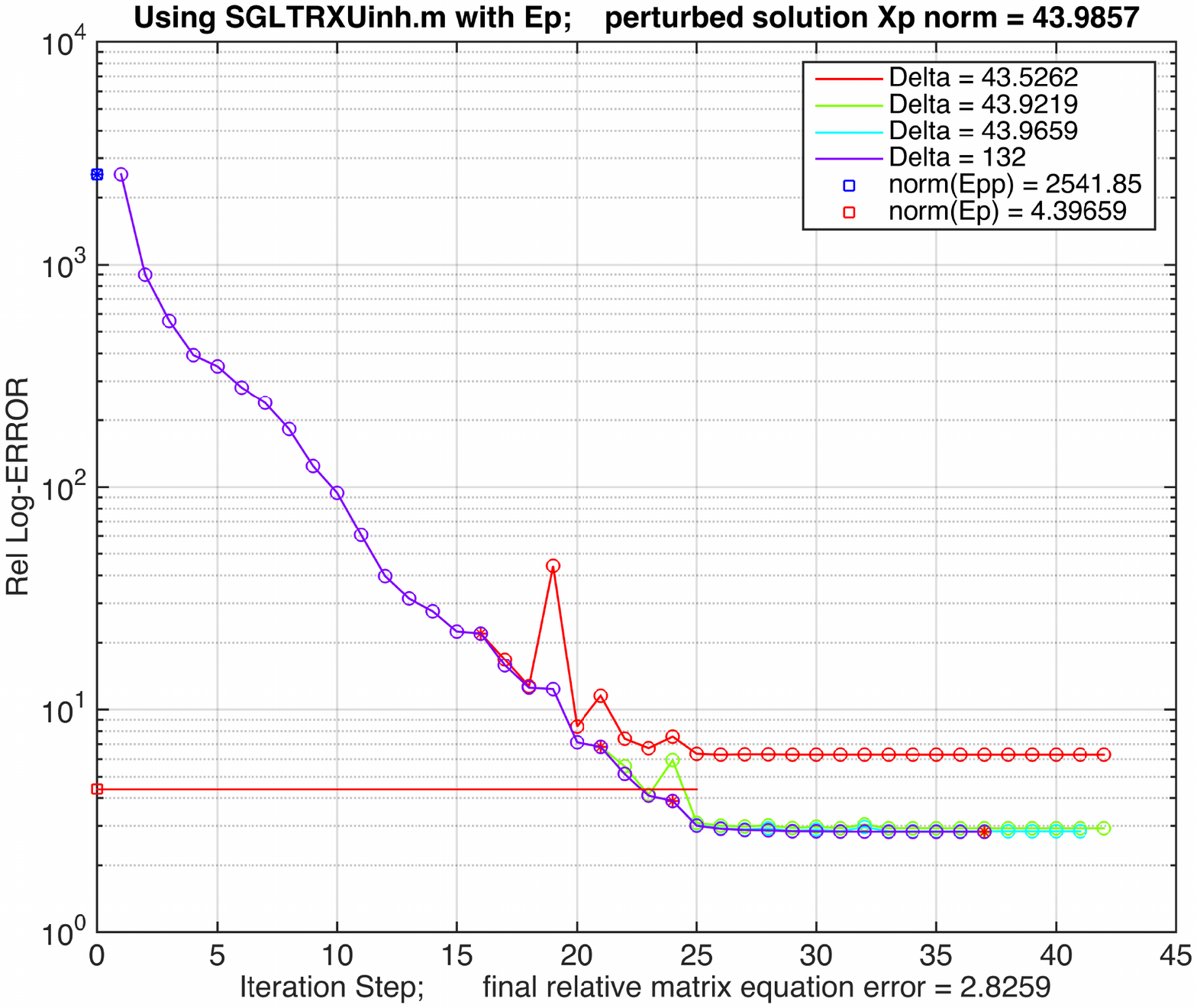}\\[-19mm]
Figure 4\\[-9mm]
\end{center}
\end{figure}
Note that the relative residual matrix equation errors do not decrease monotonically in general. But the norms $\|X_i\|$ of the iterates $X_i$ increase monotonically in practice as they do in theory, see Lemma \ref{Lemma5} and Figure 5.
\begin{figure}
\begin{center}\vspace*{-15mm}
\hspace*{-4mm}\includegraphics[width=90mm,height=100mm]{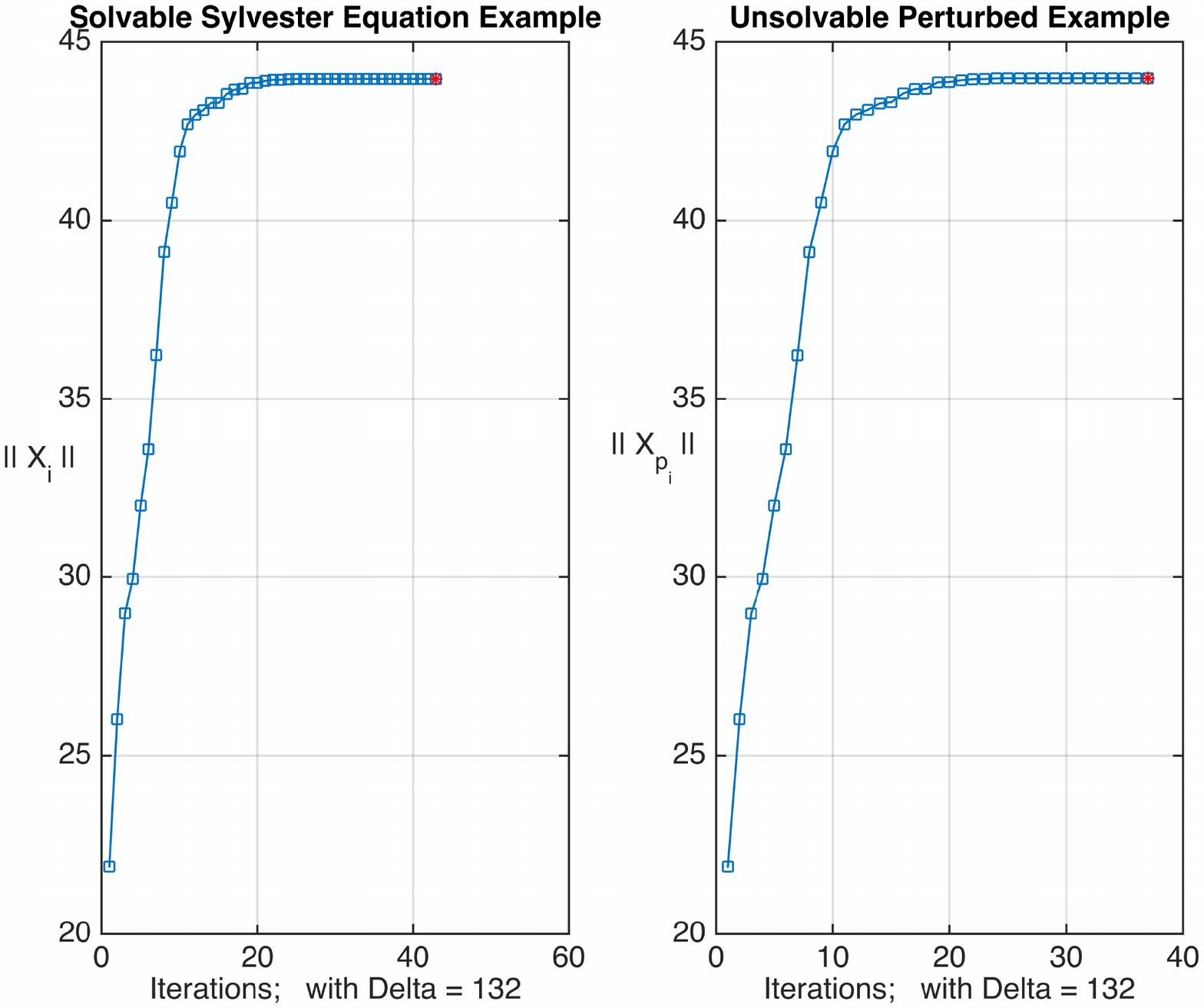}\\[-19mm]
Figure 5 \\[-9mm]
\end{center}
\end{figure}

In this example our algorithm takes 30 to 35 iterations, again exceeding the theoretical maximal iterations bound of $m \cdot n = 5 \cdot 5 = 25$ slightly. We have also investigated the effect of not starting  with $X_0 = 0$. What if we started with a matrix $X_0 \neq 0$ of norm $\Delta / 10$ or even larger? This extended the number of iterations by  30 to 40 \% and gave no better results at all, especially when $X_0$ was chosen with $\|X_0\| > \|X_{opt}\|$ for the given input matrices $A$ through $E$.\\

The final example in this subsection involves three matrices $A$, $B$, and $E$, each of size 28 by 28, for which the classical Sylvester equation $ A X + X B = E$ cannot be solved with any $\Delta$ according to the well know theory. In our chosen random entries example,  the global optimal solution $X$ has a Frobenius norm of approximately 2600. We vary $\Delta$ from  29 through 5800 and record whether the norm restricted optimal solution $X$ was computed on the boundary sphere $\{X  \mid \| X \| = \Delta\}$ or in the interior $ \{ X \mid \{\| X \| < \Delta\}$ and depict the relative matrix equation error  $\|AX + XB - E\|/\|X\|$ in Figure 6. The blue dots in the plot indicate for which $\Delta$ the optimal norm constrained solution was computed on the $\Delta$-sphere, while the red + signs indicate that for these $\Delta$ values, the optimal solution was found inside the $\Delta$-sphere.
\begin{figure}
\begin{center}\vspace*{-26mm}
\hspace*{-4mm}\includegraphics[width=90mm,height=100mm]{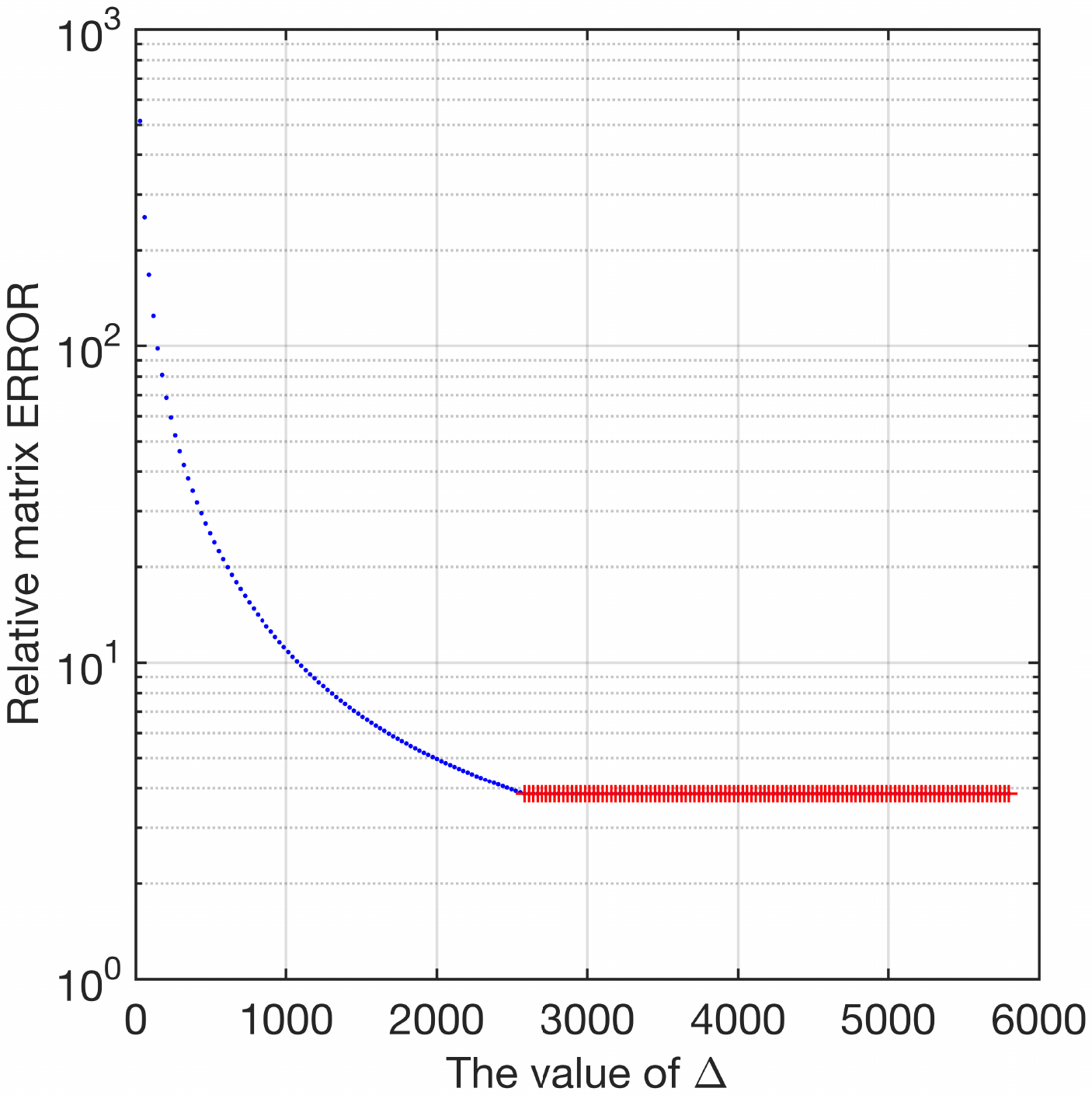}\\[-21mm]
Figure 6\\[-9mm]
\end{center}
\end{figure}
For $\Delta < 2600$ the maximal of interior branch iterations was achieved just below $\Delta =2600$ with 279 interior branch iterations and maximally 39 additional boundary branch iteration steps. For $\Delta > 2600$ the algorithm only used the interior branch and 1,902 iterations for every $\Delta > 2600$. For  smaller $\Delta \leq 1100$ our algorithm used maximally just 4 interior steps and maximally 18 boundary steps. This data is representative for many similarly sized examples with  a relative break-off tolerance of $10^{-10}$.



\subsection{ Sparse Coefficient Matrix Case.}

Here we compare our method with MATLAB's built in {\tt sylvester.m} function for solving $C_1 X + X C_2 = C_3$ for data from \cite[formula (7)]{WDT} and with a simple eigenvalue eigenvector approach suggested by Dopico \cite{DPC}. MATLAB's Sylvester equation solver is based on canonical forms and eigenspace computations as well as blocking methods. It is built on the work of Bartels and Stewart \cite{BS72} and further extentions of this method by Jonsson and K\aa gstr\"om \cite{JK2002i,JK2002ii}. 
 In the computed example below $C_1$ is 4 by 4, $C_2$ is 6400 by 6400 and sparse, and $C_3$, as well as $X$  are 4 by 6400 data matrices. 
 Since $C_1$ is small (4 by 4), an appealing solution method \cite{DPC} might be to diagonalize $C_1 = V D V^{-1}$ and solve 
 $$ VDV^{-1}X + XC_2=C_3$$
 in its  equivalent form
 $$ D(V^{-1}X) + (V^{-1}X) C_2 = V^{-1}C_3 = \tilde C_3.
 $$
 We do this one row of $\tilde X = V^{-1}X$ at a time by Gaussian elimination for the linear system 
 $$\tilde X(j,:) (d_j I + C_2) = \tilde C_3(j,:)$$ and $j = 1,..., 4$. Then $X = V \tilde X$ solves $C_1 X + X C_2 = C_3$.\\[4mm]
 Below we include time and accuracy data with $C_2$ (of size $6400 \times 6400$) in full and sparse modes for these methods that were performed on the same platform with $err = 10^{-14}$, see Table 2. \\
 \begin{table}[t]\footnotesize
  \begin{center} 
\begin{tabular}{l||c|c|c}
 & run time & speed up& relative error\\
& average& factor & ${\|C_1 X + X C_2 - C_3\| \over \|X\|}$\\[2mm] \hline &&\\[-4mm]  \hline \\[-1mm]
Using MATLAB's& & nominal\\
{\tt sylvester.m} :& 55.4 sec& 1& 1.7606e-15\\[1mm] 
Using eig and Gauss :&&\\
with full $C_2$ \cite{DPC}:& 12.96& 4.3& 1.82e-15\\
with sparse $C_2$ in each&&\\
linear system \cite{DPC}:& 2.35 & 23.6 &1.26e-15\\[1mm]
\hline &&\\[-3mm]
With our CG  method :&& \\
for data matrices &&\\
in full matrix mode:& 7.36 sec& 7.5&1.2287e-15\\
in sparse matrix mode:& 0.685 sec&80.9& 1.2314e-15\\
\end{tabular}\\[3mm]
Table 2
\end{center} 
 \end{table}
The solutions $X_{sparse}$ (with $C_2$ in sparse matrix mode) and $X_{full}$ ($C_2$ in full mode)  computed via CG plus Lanzcos or Gauss,  $Y$ from {\tt sylvester.m}  (with $C_2$ necessarily in full mode) and the  solution from \cite{WDT} differ very slightly : 
 $\|X_{sparse} - X_{full}\| = 3.9984e-14, \ \|X_{sparse}-Y\| = 1.8783e-13$, 
   $\|X_{full}-Y\| = 1.8721e-13$, and $\|X_{sparse}\| = \|X_{full}\| = \|Y\| = 1.7970e+01$.\\[2mm]
On a platform in Toulouse, Qi Wei compared our CG algorithm to the algorithm developed in \cite{WDT} that was designed for structured huge input data but not for unstructured sparse problems such as ours is. There our method took 3.3 seconds while the method of \cite{WDT}  took 27 seconds. More comparisons with structured data inputs of our and the  method of \cite{WDT} follow in Section IV.3.\\[-4mm]
 
We repeat that our CG plus Lanzcos method is iterative and uses matrix multiplications throughout which work for both sparse and full matrices in MATLAB without requiring any changes in the code while {\tt sylvester.m}  - by using canonical forms and blocking techniques - can only work when all data matrices $C_i$ are in full MATLAB mode and the simple eigen based method of \cite{DPC} requires one of $C_1$ or $C_2$ to be a very small matrix.

\subsection{Coefficient Matrix with Kronecker Structure.}

This example deals with another image fusion problem with a set of much larger image data matrices one of which has Kronecker structure. It compares the recent work of Qi Wei,  Nicolas Dobigeon, and Jean-Yves Tourneret \cite{WDT} with our algorithm when used for structured left hand side data matrices $A$ through $D$. \\[1mm]
 Here we use  image data that was acquired over Moffett Field in California in 1994 by  JPL and NASA airborne visible and infrared imaging spectrometers (AVIRIS) \cite{GR}. The original image set has high-spatial and high-spectral resolution. It contains 224 images $X_i$, each  of size $390 \times 180$ for $i=1, \cdots, 224$. The original data is stored as a third-order tensor $\mathcal{X}$ of dimensions $390 \times 180 \times 224$. Its matrix version is $X=(\vect \mathcal{X})^T \in \mathbb{R}^{224 \times 70200}$ where we  define $\vect \mathcal{X} := [\vect X_1, \vect X_2,\cdots, \vect X_{224}]$. Each  row of $X$ contains one image $X_i$. In practice, the high spatial and high-spectral image data $X$ is unknown. 

One aim of image  fusion is to approximate the unknown high-spatial high-spectral data from known high-spatial low-spectral multispectral   (MS) data (or high spatial resolution panchromatic (PAN) data) with low-spatial high-spectral hyperspectral (HS) data. These known complementary image data sets result from linear spectral and spatial degradations of the full resolution image data $X$, according to the well-regarded  model developed in \cite{WDTD}, \cite{WBD} and \cite{WDT}
\begin{equation}\label{WDT1} 
Y_M = LX+N_M, \    Y_H =XBS+N_H,  \ \text{ where }
\end{equation}
\begin{itemize}
\vspace*{1mm}
  \item $X  \in \mathbb{R}^{224 \times 70200}$ is the full resolution target image data with 224 bands (or rows) and 70200 pixels (or columns). A composite RGB image can be formed by selecting the red (or 28th), green (or 19th), blue (or 11th) bands of the target image data. This is shown in Figure 7 (a) as taken from \cite{WBD} and \cite{WDT}.\\
  \item $Y_M \in \mathbb{R}^{1 \times 70200}$ and $Y_H \in \mathbb{R}^{224 \times 2808}$ are the given MS and HS image data matrices, respectively. If the band number of MS is fewer than the subspace dimension we set below, the MS data generates into a PAN data. The one band MS image data $Y_M$, as PAN image,  with the composite color image of the HS image data $Y_H$  are shown in  Figures 7 (b) and 7 (c).\\
  \item $L \in \mathbb{R}^{1 \times 70200}$ is the $apriori$ known spectral degradation, which depends on the spectral response of the MS sensor, see  \cite{WDT}  and \cite{YMI} again.\\
  \item $B \in \mathbb{R}^{70200 \times 70200}$ is a cyclic convolution operator acting on each band. Specifically, $B$ has the factored form $B=FDF^H$, where $D \in \mathbb{R}^{70200 \times 70200}$ is a diagonal matrix from \cite{WDTD} and \cite{WDT}, and $F$ and $F^H$ are the discrete Fourier transforms (DFT) and the inverse DFT transform, respectively ($FF^H = F^H F = I_{70200}$). \\
  
  \item $S \in \mathbb{R}^{70200 \times 2808}$ is a downsampling matrix (with downsampling factor denoted by $25=5 \times 5$) acting on each band $(\vect X_i)^TB \text{ for } i=1,\cdots, 224$ as introduced in  \cite{WDT}; Moreover, the downsampling matrix satisfies the property $S^H S = I_{2808}$ and the matrix $S S^H = I_{70200}$ is idempotent, i.e., $(S^H S)(S S^H) = (S^H S)^2 = S^H S$. $S^H S$. Based on the Lemma 1 of \cite{WDTD}, the following result are obtained 
  \begin{equation}\label{4.2}
  F^H S S^H F = {1 \over 5} (1_5 \otimes I_{2808})(1^T_5 \otimes I_{2808}),
  \end{equation}
  where $1_5 \in \mathbb{R}^5$  is a vector of ones. \\
  
  \item $N_M$ and $N_H$ are the MS and HS noise matrices, respectively. The noise matrices are assumed  distributed according to the following matrix normal distributions \cite{WDT}.\\
 \begin{equation*}
N_M \sim \mathcal{MN}(0, \Lambda_M, I), \   \  N_H \sim \mathcal{MN}(0, \Lambda_H, I).
\end{equation*}
In this specific image fusion, following \cite{WDT} we  ignore the noise terms $N_M$ and $N_H$ by setting both $\Lambda_M$ and $\Lambda_H$ equal to the identity matrix.  \\
\end{itemize}

The solution $X=[\vect X_1, \vect X_2,\cdots, \vect X_{224}]^T$ of this problem does not have full rank because each  band (row) $(\vect X_i)^T$  lies in a subspace whose dimension (set 10) is much smaller than the number of bands 224. In other words,  $X=HU$ where $H$ is a full column rank matrix and $U$ is the projection of $X$ onto the subspace spanned by the column of $H$. In this image simulation, the matrix $H$ is determined from a principal component analysis (PCA) of the HS data $Y_H$  as explained in \cite{WBD}.  

It is clear how to formulate a  Sylvester matrix equation from the linear model (\ref{WDT1}) directly according to the discussions of  \cite{WDTD} and  \cite{WDT} for reconstructing the target image in absence of  regularization. Namely:
\begin{equation}\label{WDT7}
C_1 U + U C_2=C_3 
\end{equation}

\vspace*{2mm}
\noindent
where $C_1=(H^H \Lambda_H^{-1} H)^{-1} ((LH)^H \Lambda_M^{-1} LH) \in \mathbb{R}^{10 \times 10}$, $C_2 = BS(BS)^H \in \mathbb{R}^{70200 \times 70200}$ and\\[1mm]
$C_3 = (H^H \Lambda_H^{-1} H)^{-1} (H^H \Lambda_H^{-1} Y_H(BS)^H + (LH)^H \Lambda_M^{-1} Y_M) \in\mathbb{R}^{10 \times 70200}$.\\[1mm]

After solving this Sylvester matrix equation without regularization for $U_*$,  the desired fused image is $X_*=HU_*$. The key issues here is how to solve the Sylvester equation (\ref{WDT7}). Its main difficulty is the huge size of $C_2 = BS(BS)^H \in \mathbb{R}^{70200 \times 70200}$. The second Sylvester term   of (\ref{WDT7}) contains $C_2$ as a factor. $C_2$ has around $5 \cdot 10^{10}$ entries and thus is too huge to construct, to compute with directly or to store explicitly. And besides, $C_1$ is not small as was the case  in Section IV.2 .\\

Fortunately, 
$C_2= BS(BS)^H = F D (F^H S S^H F) D^H F^H$ has a specific structure as the product of the Kronecker structure sparse matrix $F^H S S^H F$, the DFT matrix $F$ and the diagonal matrices $D$ according to formula (\ref{4.2}).  Note that our  algorithm  does not destroy the Kronecker structure and sparsity of $F^H S S^H F$,  nor does it destroy diagonal matrices such as  $D$. Thus our method can take advantage of these  properties in every iteration step. We have deposited our fast implementation of the matrix product $U(C_2)$ for this specific example in \cite[fastmult\_SGLTR\_3i\_ABE.m]{UXSE2016}. There  the $f$ and $f^*$ fast matrix multiplication subroutines {\tt BluSparse} and {\tt TBluSparse} are attached for this specific problem. These codes can easily be adapted to other structured Sylvester matrix equation problems.

A further advantage of our method  is that it finds the regularization solution of equation (\ref{WDT7}) when we choose a suitable $\Delta$. Figure 7 (d) shows the composite color image fusion result obtained by  Algorithm 4.1 using the   problem specific fast codes for the matrix products that involve $...C_2$ and $...C_2^T$ that appear in the matrix function $f(...)$ and its adjoint function $f^*(...)$. In our runs we have experimented with choosing  $640 \leq \Delta \leq 2000$ and have obtained near identical optimal results, all in nearly the same CPU run times and with iteration counts differing by at most two when $\Delta$ was changed.\\[-4mm] 
\begin{figure*}[t]
\begin{center}
  \includegraphics[scale=0.55]{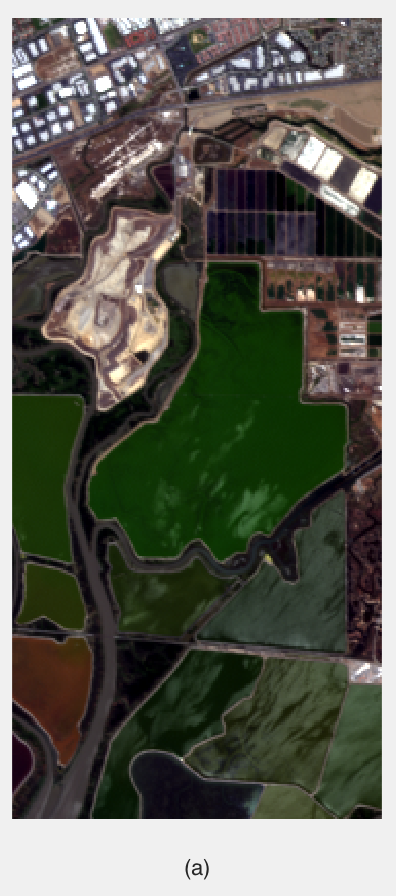}
 \includegraphics[scale=0.55]{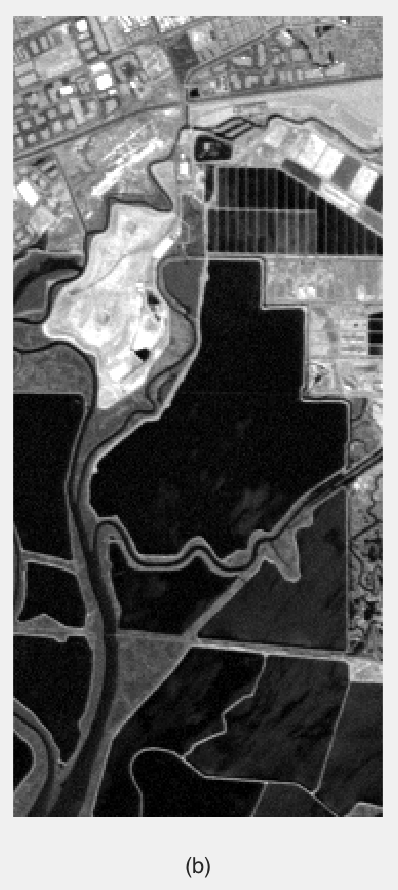}
  \includegraphics[scale=0.55]{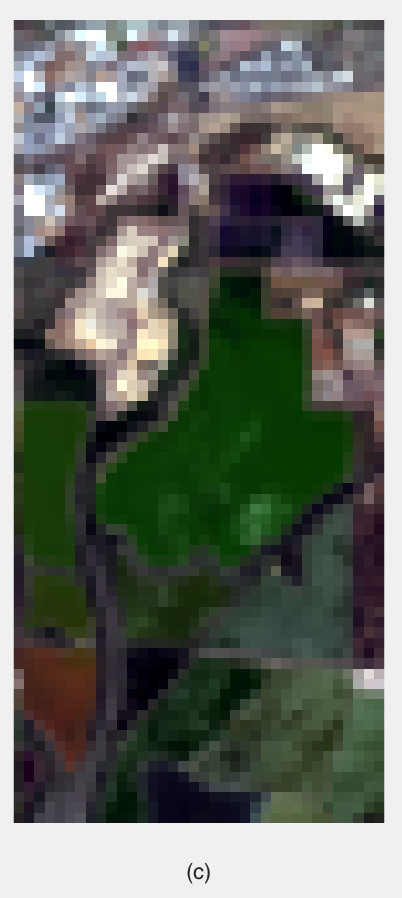}
   \includegraphics[scale=0.55]{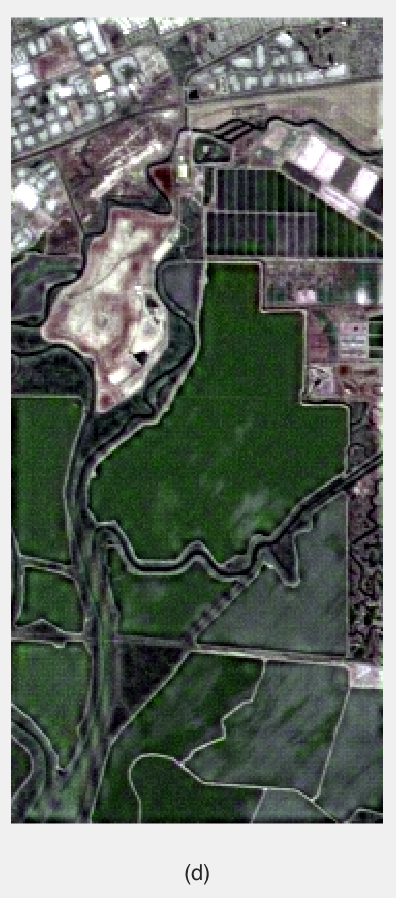}
Figure 7 (a) :  composite color image of the target image $X$; 
 (b) : the PAN image data $Y_M$;\\
    (c) :  composite color image of the HS image data matrix $Y_H$; \\
    (d) :  composite color image of image fusion obtained by using our CG plus Lanzcos and More-Sorenson algorithm.
\end{center}
\end{figure*}

Our algorithm and  the algorithm of Wei, Dobigeon and Tourneret in \cite{WDT} for image fusion differ very little in their data preparation parts. Their only significant variations occur when they solve the associated Sylvester equation which takes up between 50 to 90 \% of  total CPU time for this problem. 
The algorithm of Wei, Dobigeon and Tourneret \cite{WDT} uses interpolation techniques and  a subspace projection method. Its solution matrix $\hat X$ has norm 610.88. It is computed in 1.807 sec overall for this example, with its Sylvester solver using 0.586 sec thereof.\\
 Our CG plus Lanzcos and More-Sorenson algorithm computes the global solution as $ X^*$ with Frobenius  norm 630.89 in 20.9 seconds and its Sylvester solver  with our fast $U\cdot C_2$ implementation takes  19.1 sec of CPU time. Its relative matrix equation error $\|C_1 X^* + X^* C_2 - C_3\|_F/\|X^*\|_F$ is $9.2 \cdot 10^{-10}$ when we set $\Delta = 640$ and $err = 10^{-8}$ inside our Sylvester solver.\\[1mm]
 Clearly our method is much more accurate but more time consuming.  Tightening $err$ increases its accuracy without changing the computed solution $X^*$ except in a few of its trailing digits. To understand the inherent inaccuracy of  Wei, Dobigeon and Tourneret's projection method \cite{WDT} we have used our algorithm with $\Delta$ set equal to 
 610.90 and $err = 10^{-2}$. Then the optimal solution $\tilde X$ inside the norm bounded ball $\{ X \mid ||X\| \leq 610.90\}$  is computed on its boundary  and it has the Frobenius norm 618.94. This low accuracy run took only 5.146 sec and used 2.922 sec for its Sylvester solver part. It achieves the relative matrix equation error of $9.209 \cdot 10^{-4}$ which is a lower bound for the relative error of $\hat X$ obtained in \cite{WDT} . Thus the entries of the interpolation and projection method of solution $\hat X$   from  \cite{WDT}  carry only around 3 accurate digits.\\[1mm]
  This may be good enough for image fusion problems but it gives food for thought otherwise.\\

To further evaluate our method further for image fusion, we compare our approach to seven methods, namely to GSA\cite{ABS2007}, SFIM-HS\cite{L2000}, GLP-HS\cite{AABGS2006}, CNMF\cite{YYI2012}, HySure\cite{SBAC15}, MAP-SMM\cite{E2004} and Wei's FUSE\cite{WDT}. To compare we use the image data of Headwall's Hyperspec Visible and Near-Infrared, series C (VNIR-C) imaging sensor over Chikusei, Ibaraki, Japan, taken in 2014 \cite{YI2016}. Specifically we have selected a $540 \times 420$-pixel-size image set with 128 bands for the experiment, a 2-band MS image set and an HS image set that were obtained respectively by filtering this reference image set and by down-sampling every 5 pixels in both vertical and horizontal directions for each band of the reference image set. We present the experimental  results in table 3 with respect to four quality measures \cite{YGC2017}: \\
1) peak SNR (PSNR) defined as $PSNR(x_i, \hat{x}_i) = 10 \cdot \log_{10} \left( max(x_i)^2 \over \| x_i - \hat{x}_i\|^2_2 / P \right)$, \\
2) spectral angle mapper (SAM) defined as $SAM(x_i, \hat{x}_i) = arccos\left( x^T_j \hat{x}_j \over \| x_j \|_2  \| \hat{x}_j \|_2 \right)$, \\
3) erreur relative globale adimensionnelle de synth\`ese (ERGAS) defined as $ERGAS(X_i, \hat{X}_i) = 100d \sqrt{ {1\over B} \sum^B_{i=1} { \| x_i - \hat{x}_i\|^2_2 \over ({1\over P} 1^T_P X_i)^2}}$, \\
4) $Q2^n$ defined as $Q(x,y) = {4\sigma_{xy} \overline{xy} \over (\sigma^2_x + \sigma^2_y) (\overline{x}^2 + \overline{y}^2)}$.

The quality of the constructed image data is listed in terms of their PSNR, SAM, ERGAS and $Q2^n$. The best results are in bold. These experiments show clearly our CG method, in \cite[fastmult\_SGLTR\_3i\_ABE.m]{UXSE2016} version obtains very satisfactory results.

\begin{table}[t]
   \begin{center} 
\begin{tabular}{|l|c|c|c|c|}
\hline
  Method    & PSNR & SAM &ERGAS & $Q2^n$ \\ \hline
   GSA  & 30.2865 & 3.4605 & \textbf{3.8628}  & 0.82686 \\ \hline  
  SFIM-HS & 22.3387 & 4.1282 & 30.6702 & 0.73535 \\ \hline  
  GLP-HS & 27.2607 & 3.9683 & 4.9433 & 0.76881  \\ \hline
  CNMF    & 26.2803 & 4.6969 & 6.7028 & 0.60606 \\  \hline
  HySure & 26.0945 & 5.6266 & 5.8801 & 0.63306 \\ \hline 
  MAP-SMM & 26.2409 & 4.3180 & 5.7017 & 0.71366 \\ \hline 
  FUSE & 30.9966 & 4.1641 & 4.3650 & 0.78182 \\ \hline 
  Our CG method & \textbf{31.8324} & \textbf{3.2072} & 3.8777 & \textbf{0.8404} \\ \hline 
  \end{tabular}
  \end{center}
 \hspace*{1mm}Table 3: The quality measures for the Hyperspec Chikusel image data.
\end{table}

\subsection{T-Sylvester Matrix Equations.}
An extensive analysis of theoretical and computational aspects of various T-Sylvester type matrix equations was presented by Fr\'oilan Dopico in \cite{FD2013}. Iterative Krylov subspace projection methods and codes for  T-Sylvester equations $A X + X^T B = E$  are available in Dopico et al \cite{DGKS16} for specific low rank right hand side matrices $E$ for which  explicit multi-dyadic representations $ E _{n,n} = C1_{n,r} \cdot (C2_{n,r})^T$ with $r <\!\!< n$ are known a priori. These codes are fast, but for comparisons with our method, huge dense low  rank right hand side matrices $E$ can unfortunately not be handled by our more general iterative method which needs sparse or at least structured matrices in its active $X$ multiplications by  $C_1$ or $C_2$.  \\[-4mm]

\subsection{Outlook.}

Our MATLAB codes are collected at \cite{UXSE2016}  for solving ten different linear matrix equations.
These codes allow for two additional optional inputs apart from the necessary  input matrices: the last optional input $err$ is the desired output accuracy. For image restoration problems for example with relatively low accuracy sensor data when compared to MATLAB's machine constant, an output with a relative error of $10^{-7}$ or $10^{-10}$ for the solution $X$ might suffice rather than our default error bound of $10^{-14}$. Since our algorithm is iterative, to obtain lesser accuracy a lower error threshold will reduce the number of iterations that are needed and this can result in a near 2-fold speed gain. The last but one optional input denotes the maximal norm $\Delta$ of a solution $X$ that we consider for the given Sylvester type equation. Our default is $\Delta = 200$. But for image processing problems, choosing $\Delta=\|E\|$ may be sufficient. If  the computed solution $X$ is such that $\|X\| = \Delta$, then the algorithm has found an optimal norm bounded solution on the boundary of the admissible set $\{ X \mid \|X\| \le \Delta\}$ and there is a chance that 'better' solutions might lie beyond this $\Delta$ sphere. Increasing the $\Delta$ value to two or three times $\Delta$ and repeating the computations may find a solution with smaller relative residual matrix equation error for the given problem. Besides, there is no great efficiency penalty (involving at most just a few extra iterations) if $\Delta$ is chosen not too far above the actual norm of the optimal solution. Finally, our algorithms work equally well for all solvable and unsolvable Sylvester and T-Sylvester type equations and they either find a norm bounded solution if the equation is solvable or they find the optimal norm bounded solution if unsolvable.

As explained and shown earlier, our iterative  methods can easily be adapted and extended to solve any linear matrix equation $f(X) = E$ quickly, accurately and optimally with respect to norm limits for the solution $X$. This can be done for  dense, sparse and certain huge structured matrix systems. It has proven its value as an accuracy checker in a Sylvester matrix equation  example with a massive data matrix \cite{WDT} that previously did not allow for direct accuracy checking of the computed solution.



\section{Conclusion}
In this paper, we document a matrix iterative algorithm that uses the Conjugate Gradient and Lanzcos methods in conjunction with the GLTR algorithm and More-Sorensen's constraint optimization method to solve  inhomogeneous linear matrix equations optimally (Problem (\ref{1.1})). We prove global constrained convergence along two branches in finitely many steps. Throughout we model the Conjugate Gradient Method  for solving Sylvester type equations $f(X) = E \neq 0$ most efficiently through the equations' respective defining functions $f$ and their adjoints $f^*$. All our codes for solving $f(X) = E$ similarly rely on the use of $f$ and $f^*$ and can be easily modified to solve other linear matrix equations.\\
Our method is general and generically applicable to all  linear matrix equations $f(X) = E \neq 0$. It cannot and does not compete with specific applications methods that rely for example on knowledge of low rank factorizations of the right hand side matrix $E$ as \cite{BB13}, \cite{DGKS16}, or \cite{KPT14} do. 
  However, our numerical tests and  real world applications to image fusion and image restoration problems such as encountered in \cite{WBD},  \cite{WDT} ,and \cite{WDTD} illustrate the efficiency, accuracy, and usefulness of our algorithms for solving Sylvester type linear matrix equations.\\[-8mm]

\ifCLASSOPTIONcaptionsoff
  \newpage
\fi



%

 \begin{appendices}

      \section{ Theoretical properties of Algorithm 3.1}
We develop useful  theoretical properties of the matrix sequences of Algorithm 3.1 for the generalized Sylvester equation $f(X) = E \neq 0$. These theoretical properties describe  orthogonality relations between certain computed matrix iterates that will allow us in Section IV to prove general convergence of the method and to speed up  the algorithm further.

\begin{lemma}\label{Lemma1} 
If the matrix sequences $\{R_i \}$, $\{P_i \}$ , and $\{f(P_i)\}$ are
generated as stipulated by Algorithm 3.1, then we have
\begin{equation*}
\langle {R_i ,R_j } \rangle =0,
\quad
\quad
\langle {f(P_i), f(P_j)} \rangle=0.
\end{equation*}
for all $i\ne j$, $0\le i,j\le k$ and $\langle {P_i ,R_j } \rangle =0$ for all $0\le i<j\le k$.
\end{lemma}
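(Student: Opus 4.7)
The plan is to mimic the classical Conjugate Gradient orthogonality proof, exploiting the fact that under the Frobenius inner product the operator $f^*\circ f$ is self-adjoint and positive semidefinite: indeed $\langle f^*(f(X)),Y\rangle = \langle f(X),f(Y)\rangle = \langle X,f^*(f(Y))\rangle$. With this observation, the first branch of Algorithm 3.1 is exactly CG applied to the normal equations $f^*(f(X))=f^*(E)$, and the three orthogonality statements are the standard residual- and conjugate-orthogonality relations of CG, transported to the matrix setting via $X\leftrightarrow \vect X$.

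I would proceed by simultaneous induction on the largest index. For the base case $k=0$, the only nontrivial check is the pair $(0,1)$: using $P_0=-R_0$ and $R_1=R_0+\alpha_0 f^*(f(P_0))$, expanding $\langle R_1,R_0\rangle$ and applying the adjoint identity gives $\|R_0\|^2+\alpha_0\langle f(P_0),f(-R_0)\rangle = \|R_0\|^2-\alpha_0\|f(P_0)\|^2$, which vanishes exactly because $\alpha_0=\|R_0\|^2/\|f(P_0)\|^2$; hence also $\langle P_0,R_1\rangle=-\langle R_0,R_1\rangle=0$.

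For the inductive step, assume all three relations hold for indices $\le k$ and extend to include $k+1$. To show $\langle R_{k+1},R_j\rangle=0$ for $j\le k$, I would substitute the $R_{k+1}$-recurrence, push $f^*$ across with the adjoint identity, and then rewrite $R_j=-P_j+\beta_{j-1}P_{j-1}$ to reduce to terms of the form $\langle f(P_k),f(P_j)\rangle$ that vanish by hypothesis when $j<k$; the delicate case $j=k$ is exactly where the defining choice of $\alpha_k$ cancels the surviving $\|R_k\|^2-\alpha_k\|f(P_k)\|^2$. To show $\langle f(P_{k+1}),f(P_j)\rangle=0$ for $j\le k$, I would apply $f$ to the $P_{k+1}$-recurrence, distribute, and use $f^*(f(P_j))=(R_{j+1}-R_j)/\alpha_j$ (from the $R$-recurrence) to convert $f$-conjugate pairings against $P_j$ into ordinary pairings among residuals, which are already controlled; the case $j=k$ fixes $\beta_k=\|R_{k+1}\|^2/\|R_k\|^2$ as the unique value that makes the leftover terms cancel. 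Finally $\langle P_i,R_{k+1}\rangle=0$ for $i\le k$ drops out by substituting $P_i=-R_i+\beta_{i-1}P_{i-1}$ and invoking the two relations just established.

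The main obstacle is not a conceptual one but a bookkeeping one: the three statements must be carried through the induction in lockstep, because the $j=k$ boundary instances of each feed into the $(k{+}1)$-case of another, and because in the matrix setting one must systematically route every appearance of $f$ through the adjoint identity $\langle f(A),f(B)\rangle=\langle A,f^*(f(B))\rangle$ to keep the algebra cleanly parallel to the classical vector CG argument.
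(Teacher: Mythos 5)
Your proposal is correct and follows essentially the same route as the paper: a simultaneous induction over the three orthogonality relations, driven by the recurrences for $R_{k+1}$ and $P_{k+1}$, the adjoint identity $\langle f^*(f(A)),B\rangle=\langle f(A),f(B)\rangle$, and the defining choices of $\alpha_k$ and $\beta_k$ to force the boundary cancellations. The only difference is bookkeeping --- you induct on the largest index while the paper first treats adjacent pairs and then inducts on the index gap $l$ --- which does not change the substance of the argument.
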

\begin{proof}  

We use induction to prove that the conclusion holds for all $0\le i<j\le k$.\\[2mm]
\emph{ Step 1}. We must show that $\langle {R_i ,R_{i+1} } \rangle =0$, $\langle {P_i ,R_{i+1} } \rangle =0$ and $\langle {f(P_i),f(P_j)} \rangle =0$ for all $i=0,1,2,\dots k$. When $i=0$ then $R_0=-P_0$ and we have\\[-2mm]

\small{
\noindent
$
\langle {R_0 ,R_1 } \rangle \ = \ \langle {R_0 ,R_0 +\alpha _0 f^* (f(P_0) )} \rangle = \ \langle {R_0 ,R_0 } \rangle +\frac{\langle {R_0 ,R_0 }
\rangle}{\langle {f(P_0),f(P_0)} \rangle }\langle {R_0 ,f^*( f(P_0))} \rangle $\\
$\hspace*{3mm} = \ \langle {R_0 ,R_0 } \rangle +\frac{\langle {R_0 ,R_0 }
\rangle }{\langle {f(P_0),f(P_0)} \rangle  }\langle {f(R_0),f(P_0)} \rangle \ \ = \ \ 0$\\
\text{and}\\[1mm]
$\langle {P_0 ,R_1 } \rangle  \ = \ \langle {P_0 ,R_0 +\alpha _0 f^* (f(P_0))} \rangle  = \ \langle {P_0 ,R_0 } \rangle  +\frac{\langle {R_0 ,R_0 }
\rangle }{\langle {f(P_0),f(P_0)} \rangle  }\langle
{P_0 , f^*( f(P_0))} \rangle  $\\
$\hspace*{3mm} = \ \langle {P_0 ,R_0 } \rangle  +\frac{\langle {R_0 ,R_0 }
\rangle  }{\langle {f(P_0),f(P_0)} \rangle } \langle {f(P_0),f(P_0)} \rangle  \ \ = \ \ 0 $\\
and finally\\[1mm]
$\langle {f(R_0),f(P_1)} \rangle  \ = \ \langle {f(R_0),f(-R_1+\beta _0 P_0 ) } \rangle  = \  -\langle {f(P_0),f(R_1)} \rangle  +\frac{\langle {R_1 ,R_1
} \rangle }{\langle {R_0 ,R_0 } \rangle  }\langle {f(P_0),f(P_0)} \rangle  $\\
$\hspace*{3mm} = \  -\langle {f^* f(P_0),R_1 } \rangle   \ +  \frac{\langle {R_1,R_1 } \rangle  }{\langle {R_0 ,R_0 } \rangle  }\langle {f(P_0),f(P_0)} \rangle   = \  -\frac{\langle {f(P_0),f(P_0)} \rangle }{\langle {R_0 ,R_0} \rangle  }\langle {R_1 -R_0 ,R_1 } \rangle  + \frac{\langle {R_1 ,R_1 } \rangle  }{\langle {R_0 ,R_0 }
\rangle  }\langle {f(P_0),f(P_0)} \rangle = 0.$
}

\noindent
Assume that the conclusion holds for all $i\le s\ (0<s<k)$. Then\\[1mm]
\small{
$\langle {R_s ,R_{s+1} } \rangle  \ = \ \langle {R_s ,R_s +\alpha _s f^*( f(P_s))} \rangle   = \  \langle {R_s ,R_s } \rangle  +\alpha _s \langle {R_s ,f^* (f(P_s))} \rangle  $\\
$\hspace*{3mm}  = \  \langle {R_s ,R_s } \rangle  +\alpha _s \langle
{-P_s +\beta _{s-1} P_{s-1} ,f^*( f(P_s))} \rangle    = \  \langle {R_s ,R_s } \rangle  +\alpha _s\langle
{-P_s ,f^*(f(P_s))} \rangle \ + \ \alpha _s \langle {\beta _{s-1} P_{s-1} ,f^*(f(P_s))} \rangle  $\\
$\hspace*{3mm}  = \ \langle {R_s ,R_s } \rangle  +\frac{\langle {R_s ,R_s } \rangle  }{\langle {f(P_s),f(P_s)} \rangle }\langle {f(-P_s),f(P_s)} \rangle  \
 = \ 0, $\\[1mm]
and\\[1mm]
$ \langle {P_s ,R_{s+1} } \rangle  \  = \ \langle {P_s ,R_s +\alpha
_s f^*(f(P_s))} \rangle  = \  \langle {P_s ,R_s } \rangle  \ + \frac{\langle {R_s ,R_s }\rangle  }{\langle {f(P_s),f(P_s)} \rangle }\langle
{P_s ,f^*(f(P_s) )} \rangle  $\\
$\hspace*{3mm}  = \  \langle {-R_s +\beta _{s-1} P_{s-1} ,R_s } \rangle 
+\frac{\langle {R_s ,R_s } \rangle  }{\langle {f(P_s),f(P_s)}
\rangle}\langle {f(P_s),f(P_s)}
\rangle = \ 0.$ \\
and\\[1mm]
$\langle {f(P_s),f(P_{s+1})} \rangle  \ =  \ \langle {f(P_s),f(-R_{s+1} +\beta _s P_s )} \rangle   = \ \langle {f(P_s),f(R_{s+1})} \rangle  +\frac{\langle
{R_{s+1} ,R_{s+1} } \rangle  }{\langle {R_s ,R_s } \rangle 
}\langle {f(P_s),f(P_s)} \rangle  $\\
$\hspace*{3mm}  =   -\langle {f^*f(P_s),R_{s+1} } \rangle  \ + \ \frac{\langle {R_{s+1} ,R_{s+1} } \rangle  }{\langle {R_s ,R_s } \rangle }\langle {f(P_s),f(P_s)} \rangle    $\\
$\hspace*{3mm}  =   -\frac{\langle {f(P_s),f(P_s)} \rangle  }{\langle {R_s ,R_s
} \rangle  }\langle {R_{s+1} -R_s ,R_{s+1} } \rangle     +  \frac{\langle {R_{s+1} ,R_{s+1} } \rangle  }{\langle {R_s
, R_s } \rangle  }\langle {f(P_s),f(P_s)} \rangle  =0.$\\[0mm]

}

\noindent
By induction $\langle {P_i ,R_{i+1} } \rangle  =0$, $\langle {R_i ,R_{i+1} } \rangle  =0$ and $\langle {f(P_i),f(P_{i+1})} \rangle =0$
 for all $i=0,1,2,\dots, k$.\\[2mm]
\noindent
\emph{Step 2}. We use that $\langle {P_i ,R_{i+l} } \rangle  =0$,
$\langle {f(P_i),f(P_{i+l})} \rangle  =0$ and $\langle {R_i ,R_{i+l} } \rangle  =0$ for all $0\le i\le k$ and $1<l<k$ and show that $\langle {P_i ,R_{i+l+1} } \rangle  =0$, $\langle {f(P_i),f(P_{i+l+1})} \rangle $ $=0$ and $\langle {R_i ,R_{i+l+1} }\rangle  =0$:\\[2mm] 
$\langle {P_i ,R_{i+l+1} } \rangle  \ = \ \langle {P_i ,R_{i+l}
+\alpha _{i+l} f^*(f(P_{i+l}))} \rangle    = \ \langle {P_i ,R_{i+l} } \rangle  +\alpha _{i+l}\langle
P_i ,f^*(f(P_{i+l})) \rangle   = \ \alpha _{i+l} \langle f(P_i),f(P_{i+l}) \rangle  \ = \ 0. $\\
$\langle {f(P_i),f(P_{i+l+1})} \rangle  \ =\langle {P_i,f^*(f(P_{i+l+1}))} \rangle  = \frac{1}{\alpha _{i+l} }\langle {P_i ,R_{i+l+1} -R_{i+l} }
\rangle  \ = \ 0. $\\[1mm] 
\noindent
and\\[1mm]     
$\langle {R_i ,R_{i+l+1} } \rangle  \ = \langle {R_i ,R_{i+l}
+\alpha _{i+l} f^*(f(P_{i+l}))} \rangle  = \alpha _{i+l} \langle {R_i ,f^*(f(P_{i+l}))} \rangle $ \\
$\hspace*{3mm} = \alpha _{i+l} \langle {-P_i +\beta _{i-1} P_{i-1} , f^*(f(P_{i+l}))}\rangle  = \alpha _{i+l} (\langle {-f(P_i),f(P_{i+l})} \rangle  +\beta
_{i-1} \langle {f(P_{i-1}),f(P_{i+l})} \rangle ) = 0$ \\[1mm]  
From Steps 1 and Step 2, we learn by  induction that 
\[
\langle R_i , R_j \rangle  =0, \quad \langle {P_i ,R_j } \rangle  =0,\quad \langle {f(P_i),f(P_j)} \rangle  =0
\]
for $0\le i<j\le k$. Since $\langle {A,B} \rangle =\langle {B,A} \rangle $  holds for all same size matrix pairs $A$ and $B$, $\langle R_i , R_j \rangle  =0, \langle {f(P_i),f(P_j)} \rangle  =0$ also hold for all $0\le j<i\le k$ and our proof is complete.

\end{proof}

\begin{lemma}\label{Lemma5}

The sequence of matrices $X_k$ generated by Algorithm 3.1 satisfies 
\begin{equation*}
0=\|X_0\|< \cdots <\|X_k\|<\|X_{k+1}\| \ .
\end{equation*}

\end{lemma}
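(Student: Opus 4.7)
The plan is to exploit the CG recurrence $X_{k+1} = X_k + \alpha_k P_k$ directly. Expanding the Frobenius norm squared yields
$$\|X_{k+1}\|^2 \ = \ \|X_k\|^2 \ + \ 2\alpha_k \langle X_k, P_k \rangle \ + \ \alpha_k^2 \|P_k\|^2,$$
so the claim $\|X_{k+1}\| > \|X_k\|$ reduces to showing that $2\alpha_k \langle X_k, P_k\rangle + \alpha_k^2 \|P_k\|^2 > 0$. First I would note that while the algorithm is in branch 1 we have $R_k \neq 0$ and $f(P_k) \neq 0$ (otherwise the algorithm would have stopped or switched), so $\alpha_k = \|R_k\|^2/\|f(P_k)\|^2 > 0$ and $\alpha_k^2 \|P_k\|^2 > 0$. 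Thus it is enough to prove the one-sided inequality $\langle X_k, P_k\rangle \ge 0$.

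Since $X_0 = 0$, a trivial induction using the recursion gives $X_k = \sum_{i=0}^{k-1}\alpha_i P_i$, so
$$\langle X_k, P_k\rangle \ = \ \sum_{i=0}^{k-1} \alpha_i \, \langle P_i, P_k\rangle .$$
Because all $\alpha_i > 0$, the proof reduces to the key step of showing that $\langle P_i, P_k\rangle \ge 0$ for every $0 \le i \le k-1$. This is where Lemma \ref{Lemma1} does the work. Using the recurrence $P_k = -R_k + \beta_{k-1} P_{k-1}$ together with $\langle P_i, R_k\rangle = 0$ for $i < k$ from Lemma \ref{Lemma1}, I would compute
$$\langle P_i, P_k \rangle \ = \ -\langle P_i, R_k\rangle + \beta_{k-1}\langle P_i, P_{k-1}\rangle \ = \ \beta_{k-1}\langle P_i, P_{k-1}\rangle ,$$
and then iterate this identity down the index to obtain the closed form
$$\langle P_i, P_k\rangle \ = \ \beta_{k-1}\beta_{k-2}\cdots \beta_i \, \|P_i\|^2 .$$
Each $\beta_j = \|R_{j+1}\|^2/\|R_j\|^2$ is strictly positive while the algorithm is running (if any $R_{j+1} = 0$ we would be done), and $\|P_i\|^2 > 0$ for the same reason, so the product is strictly positive.

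Combining, $\langle X_k, P_k\rangle = \sum_{i=0}^{k-1} \alpha_i \beta_{k-1}\cdots\beta_i \|P_i\|^2 \ge 0$, with strict inequality for $k \ge 1$, and consequently $\|X_{k+1}\|^2 > \|X_k\|^2$. The base case $0 = \|X_0\| < \|X_1\|$ is immediate since $X_1 = \alpha_0 P_0 = -\alpha_0 R_0 \ne 0$ by Remark \ref{Remark10}. The main obstacle is the telescoping identity for $\langle P_i, P_k \rangle$; everything else is a bookkeeping application of the orthogonality relations established in Lemma \ref{Lemma1}.
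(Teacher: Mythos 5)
Your proof is correct and follows essentially the same route as the paper's: both expand $\|X_{k+1}\|^2 = \|X_k\|^2 + 2\alpha_k\langle X_k,P_k\rangle + \alpha_k^2\|P_k\|^2$ and reduce the claim to the nonnegativity of $\langle X_k,P_k\rangle$ via the orthogonality relations of Lemma \ref{Lemma1}. The only difference is in that last step: the paper proves $\langle X_k,P_k\rangle>0$ by induction on $k$, whereas you unroll it into the explicit product $\langle P_i,P_k\rangle=\beta_{k-1}\cdots\beta_i\,\|P_i\|^2$ (a standard conjugate-gradient identity), which yields the same conclusion under the same implicit assumptions ($R_j\neq 0$, $f(P_j)\neq 0$ while branch 1 is active).
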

\begin{proof}

We first show that the matrix sequences generated by Algorithm 3.1 satisfy $\langle {X_k, R_k \rangle  }=0$ for $k \geqslant 0$ and $\langle {X_k, P_k \rangle  }=0$ for $k\geqslant1$.\\[2mm]
Our algorithm computes $X_{k+1}$ recursively in terms of $X_{k}$. Once all the terms of this recursion are written out explicitly, we have
\begin{equation*}
X_k \ =\ X_0+\sum^{k-1}_{i=0}\alpha_i P_i \ = \ \sum^{k-1}_{i=0}\alpha_i P_i
\end{equation*}
since $X_0=0$. Taking the inner product with $R_j$ and applying Lemma \ref{Lemma1} gives us
\begin{equation*}
\langle {X_k, R_k \rangle  } \  =  \ \langle { \sum^{k-1}_{i=0}\alpha_i P_i, R_k \rangle  } \ = \ \sum^{k-1}_{i=0}\alpha_i\langle {  P_i, R_k \rangle  } \ = \ 0.
\end{equation*}

An induction proof  will  establish the second assertion that $\langle{  X_k, P_k \rangle  }>0$. To do so we apply  Lemma \ref{Lemma1} again and obtain
\begin{equation}\label{16}
\langle{  X_1, P_1 \rangle  }  =  \langle{ \alpha_0 P_0, -R_1+\beta_0 P_0 \rangle  }  = \alpha_0 \beta_0 \langle{  P_0,  P_0 \rangle  }  > 0.
\end{equation}
We now make the inductive hypothesis that  $\langle{  X_k, P_k \rangle  }>0$ and show that this implies $\langle{  X_{k+1}, P_{k+1} \rangle  }>0$. From (\ref{16}), we have $\langle{  X_{k+1}, R_{k+1} \rangle  }=0$, and therefore we have
\small{\begin{eqnarray*}
\langle{  X_{k+1}, P_{k+1} \rangle  }=\langle{  X_{k+1}, -R_{k+1}+\beta_{k}P_{k} \rangle  } 
   =   \beta_{k} \langle{  X_{k+1}, P_k \rangle  }= \\
  =  \beta_{k} \langle{  X_{k}+\alpha_k P_k, P_k \rangle  } =\beta_{k} \langle{  X_{k}, P_k \rangle  }+\alpha_k \beta_{k} \langle{ P_k, P_k \rangle  }.
\end{eqnarray*}}
And by the inductive hypothesis the last expression is positive.\\[-3mm]

Next we prove that $\|X_k\|<\|X_{k+1}\|$, where $X_{k+1} = X_k + \alpha_k P_k$ and $k \geqslant 1$. Observe that \\[3mm]
$$
\|X_{k+1}\|^2=\langle{ X_k+\alpha_k P_k, X_k + \alpha P_k \rangle } = \|X_{k}\|^2 + 2\alpha_k \langle{ X_k, P_k \rangle } +\alpha^2_k \|P_{k}\|^2 > \|X_{k}\|^2 + \alpha^2_k \|P_{k}\|^2$$
This shows  that $\|X_k\|< \|X_{k+1}\|$ and our proof is complete.
\end{proof}

\begin{remark}\label{Remark2}
In Lemma \ref{Lemma1}, the matrix sequence $R_0 ,R_1 ,R_2 ,\dots \subset \R^{m\times n}$ is mutually orthogonal. Therefore there is a positive number $k+1\le m\cdot n $ with $R_{k+1} =0$. Hence, disregarding rounding errors, as long as the algorithm never switches to the second branch, the first stopping criterion of our algorithm will be satisfied after finitely many iterations.
\end{remark}

\begin{lemma}\label{Lemma2} \cite[Lemma 2]{XXP15}
Let $\{Q_i \}$ be the matrix sequence generated by Algorithm 3.1.
    Then this matrix sequence consists of mutually  orthonormal matrices in the Frobenius norm.
\end{lemma}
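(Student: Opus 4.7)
The plan is to use induction on $k$, exploiting the fact that the recursion
\[
t_{k+1} \;=\; f^*(f(Q_k)) - \delta_k Q_k - \gamma_k Q_{k-1}
\]
is a Lanczos three-term recurrence driven by the operator $f^*\circ f$, which is self-adjoint with respect to the Frobenius inner product $\langle\cdot,\cdot\rangle$. The normalization $Q_k = t_k/\gamma_k$ together with $\gamma_k = \|t_k\|$ already gives $\|Q_k\|=1$ by construction, so only mutual orthogonality is at issue.

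For the base step, a direct computation yields
\[
\gamma_1\langle Q_1,Q_0\rangle \;=\; \langle t_1,Q_0\rangle \;=\; \langle f^*(f(Q_0)),Q_0\rangle - \delta_0\langle Q_0,Q_0\rangle - \gamma_0\langle Q_{-1},Q_0\rangle \;=\; \|f(Q_0)\|^2 - \delta_0 \;=\; 0,
\]
since $Q_{-1}=0$ and $\delta_0 = \|f(Q_0)\|^2$.

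For the inductive step, assume $Q_0,\dots,Q_k$ are mutually orthonormal. I would show $\langle Q_{k+1},Q_j\rangle = 0$ for every $j\le k$ by splitting into three cases according to $|k-j|$.

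\emph{Case $j=k$.} As in the base step, self-adjointness of $f^*\!\circ\! f$ and the definition $\delta_k=\|f(Q_k)\|^2$ give
$\gamma_{k+1}\langle Q_{k+1},Q_k\rangle = \langle f^*(f(Q_k)),Q_k\rangle - \delta_k\langle Q_k,Q_k\rangle - \gamma_k\langle Q_{k-1},Q_k\rangle = \delta_k - \delta_k - 0 = 0$.

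\emph{Case $j=k-1$.} This is the only step that needs a trick. After expanding
$\gamma_{k+1}\langle Q_{k+1},Q_{k-1}\rangle = \langle f(Q_k),f(Q_{k-1})\rangle - \gamma_k$,
one must identify $\langle f(Q_k),f(Q_{k-1})\rangle$ with $\gamma_k$. I would obtain this by taking the relation $\gamma_k Q_k = f^*(f(Q_{k-1})) - \delta_{k-1}Q_{k-1} - \gamma_{k-1}Q_{k-2}$ (i.e., the recurrence one step back), inner-producting with $Q_k$, and using orthonormality of $Q_{k-2},Q_{k-1},Q_k$ to get $\gamma_k = \langle f^*(f(Q_{k-1})),Q_k\rangle = \langle f(Q_{k-1}),f(Q_k)\rangle$.

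\emph{Case $j\le k-2$.} I would rewrite the recurrence defining $t_{j+1}$ as
\[
f^*(f(Q_j)) \;=\; \gamma_{j+1}Q_{j+1} + \delta_j Q_j + \gamma_j Q_{j-1},
\]
so that $\langle f(Q_k),f(Q_j)\rangle = \langle Q_k, f^*(f(Q_j))\rangle$ becomes a linear combination of $\langle Q_k,Q_{j+1}\rangle,\langle Q_k,Q_j\rangle,\langle Q_k,Q_{j-1}\rangle$; all three vanish by the inductive hypothesis because $j+1\le k-1<k$. Substituting back, $\gamma_{k+1}\langle Q_{k+1},Q_j\rangle$ reduces to $\langle f(Q_k),f(Q_j)\rangle - \delta_k\langle Q_k,Q_j\rangle - \gamma_k\langle Q_{k-1},Q_j\rangle = 0$.

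The main obstacle is the $j=k-1$ case, which essentially encodes the tridiagonality of the matrix representation of $f^*\!\circ\! f$ in the Lanczos basis; all other cases are formal consequences of the three-term recurrence. Once the induction closes, the sequence $\{Q_0,Q_1,\dots\}$ is orthonormal up to the first index where $\gamma_{k+1}=0$, at which point the sequence terminates, as already noted in Remark 4.6.
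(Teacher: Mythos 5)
Your proof is correct, and it rests on the same two ingredients as the paper's: the three-term Lanczos recurrence $\gamma_{k+1}Q_{k+1}=f^*(f(Q_k))-\delta_kQ_k-\gamma_kQ_{k-1}$ and the self-adjointness of $f^*\!\circ\! f$ with respect to $\langle A,B\rangle=\tr(B^TA)$. The difference is organizational, and it matters. The paper runs a two-stage induction: Step 1 proves orthogonality of adjacent pairs $\langle Q_i,Q_{i+1}\rangle=0$ by induction on $i$, and Step 2 inducts on the separation $l$, assuming the result for $1<l<k$ and deriving it for $l+1$; in that second step every term $\delta_{i+l}\langle Q_i,Q_{i+l}\rangle$ and $\gamma_{i+l}\langle Q_i,Q_{i+l-1}\rangle$ is discarded as zero, which is only legitimate once $l\ge 2$ — so the separation-$2$ base case, where $\langle Q_i,Q_{i+l-1}\rangle=\langle Q_i,Q_i\rangle=1$ survives and must cancel against $\langle f(Q_i),f(Q_{i+1})\rangle=\gamma_{i+1}$, is not actually written out. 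Your single strong induction on the index $k$, with the case split $j=k$, $j=k-1$, $j\le k-2$, isolates exactly this cancellation in the $j=k-1$ case (via inner-producting the recurrence one step back against $Q_k$ to get $\langle f(Q_{k-1}),f(Q_k)\rangle=\gamma_k$), and so supplies the step the paper's write-up glosses over. What the paper's decomposition buys is a presentation parallel to its Lemma A.1 for the CG quantities; what yours buys is a self-contained argument in which the one non-formal step — the tridiagonality of $f^*\!\circ\! f$ in the Lanczos basis — is made explicit rather than absorbed into an inductive hypothesis that was never seeded.
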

\noindent
The proof below replicates the one given for the matrix equation $AXB = C$ {with symmetric constraint} in \cite{XXP15}. We rework it here for clarity, now with the generalized Sylvester equation with its any number of terms function $f(X)$ in \ref{1.0} instead of just $AXB$ in \cite{XXP15}.\\[1mm]
Recall that the matrix inner product of two matrices in $\R^{m\times n}$ is defined as $\langle A,B \rangle =\tr  (B^T A)$.

\begin{proof} By  definition  $\langle {Q_i ,Q_i } \rangle  =1$ for  all $i = 0,1,2,\dots $.  And we again use induction in the following two steps:  \\[1mm]
\emph{Step 1}. Show that $\langle {Q_i ,Q_{i+1} } \rangle  =0$ for all
$i=0,1,2,\dots k$: When $i=0$, we have\\[2mm]
$\langle {Q_0 ,Q_1 } \rangle  \ =  \  \frac{1}{\gamma _1 }\langle
{Q_0,f^*(f(Q_0))-\delta _0 Q_0 } \rangle   = \  \frac{1}{\gamma _1 }(\langle {Q_0 ,f^*(f(Q_0))} \rangle -\langle {Q_0 ,\delta _0 Q_0 } \rangle  ) $\\
$\hspace*{3mm}  = \  \frac{1}{\gamma _1 }(\langle {f(Q_0),f(Q_0) } \rangle 
-\langle {f(Q_0),f(Q_0)} \rangle  \cdot \langle {Q_0 ,Q_0 } \rangle  )= \ 0.$ \\[1mm]
Assume that the conclusion hold for all $i\le s\ (0<s<k)$. Then\\[2mm]
$\langle {Q_s ,Q_{s+1} } \rangle  \  = \  \frac{1}{\gamma _{s+1}
}\langle {Q_s ,f^*(f(Q_s)) -\delta _s Q_s -\gamma _s Q_{s-1} } \rangle    = \ \frac{1}{\gamma _{s+1} }( \langle Q_s ,f^*(f(Q_s)) \rangle  -\langle {Q_s ,\delta _s Q_s } \rangle  \ -\langle {Q_s ,\gamma_s Q_{s-1} } \rangle  ) $\\
$\hspace*{3mm}  = \ \frac{1}{\gamma _{s+1} }(\langle {f(Q_s),f(Q_s B)} \rangle  -\langle {f(Q_s),f(Q_s)} \rangle  \cdot \langle {Q_s ,Q_s } \rangle   -\gamma _s \langle {Q_s ,Q_{s-1} } \rangle  )=0$\\[1mm]
Hence by induction, $\langle {Q_i ,Q_{i+1} } \rangle  =0$ holds for all $i=0,1,2,\dots k$.\\[2mm]
\emph{Step 2}. Assume that $\langle {Q_i ,Q_{i+l} } \rangle  =0$ for all
$0\le i\le k$ and $1<l<k$. Then\\[2mm]
$\langle {Q_i ,Q_{i+l+1} } \rangle  \  = \ \frac{1}{\gamma _{i+l+1}}\langle Q_i ,f^*(f(Q_{i+l})) -\delta _{i+l} Q_{i+l} -\gamma _{i+l}Q_{i+l-1}  \rangle  $\\
$\hspace*{3mm}  = \ \frac{1}{\gamma _{i+l+1} }(\langle {f(Q_i),f(Q_{i+l})} \rangle  -\langle {Q_i ,\delta _{i+l} Q_{i+l} } \rangle  -\langle {Q_i,\gamma _{i+l} Q_{i+l-1} }\rangle  ) $\\
$\hspace*{3mm}  = \ \frac{1}{\gamma _{i+l+1} }\langle {f(Q_i),f(Q_{i+l})}\rangle   = \ \frac{1}{\gamma _{i+l+1} }\langle {f^*f(Q_{i})} \rangle  $\\
$\hspace*{3mm}  = \ \frac{1}{\gamma _{i+l+1} }\langle {-\gamma _{i+1} Q_{i+1} -\delta _i Q_i -\gamma _i Q_{i-1} ,Q_{i+l} } \rangle  \ = \ 0.$\\[1mm]
Steps 1 and 2 prove that $\langle {Q_i ,Q_j } \rangle =0$ for all $i,j=0,1,2,\dots k,i\ne j$.  \end{proof}


\begin{lemma}\label{Lemma3} \cite[Lemma 3]{XXP15}
Let  $\{\gamma _k \}$, $\{T_k \}$ and $\{Q_i
\}$ be the sequences generated by Algorithm 3.1. Let
\small{
\begin{eqnarray*}
\tilde {X}=Q_0 h^0+Q_1 h^1+...+Q_k h^k =(Q_0 ,Q_1 ,\dots ,Q_k )(h\otimes
I) ,
h=(h^0,h^1,\dots ,h^k)^T\in \R^{k+1},
\end{eqnarray*}}
Then
\begin{equation*}
\frac{1}{2}\langle {f(\tilde {X}),f(\tilde {X})} \rangle 
-\langle {f(\tilde {X}),E} \rangle  =\frac{1}{2}h^TT_k h+\gamma _0
h^Te_1 ,
\end{equation*}
where $e_1 $ is the first unit vector and $T_k$ is positive semi-definite.
\end{lemma}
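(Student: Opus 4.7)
The plan is to reduce both inner products on the left hand side to expressions in the coefficient vector $h$ by exploiting (i) the linearity of $f$ in $X$, (ii) the orthonormality of $\{Q_i\}$ established in Lemma \ref{Lemma2}, and (iii) the Lanczos three-term recurrence built into Step 1.1/2.1 of Algorithm 3.1. Once both terms are rewritten, the matrix $T_k$ simply drops out as the Gram matrix of the $Q_i$ under the operator $f^{*}f$.

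First I would handle the quadratic term. By linearity of $f$ and bilinearity of $\langle \cdot ,\cdot \rangle$,
\[
\langle f(\tilde X), f(\tilde X)\rangle \;=\; \sum_{i,j=0}^{k} h^i h^j \,\langle f(Q_i), f(Q_j)\rangle \;=\; \sum_{i,j=0}^{k} h^i h^j \,\langle Q_i, f^{*}(f(Q_j))\rangle,
\]
using the definition of the adjoint. Now the recurrence $t_{j+1}=f^{*}(f(Q_j))-\delta_j Q_j-\gamma_j Q_{j-1}$ with $t_{j+1}=\gamma_{j+1}Q_{j+1}$ (and the convention $Q_{-1}=0$) gives
\[
f^{*}(f(Q_j)) \;=\; \gamma_{j+1}Q_{j+1}+\delta_j Q_j + \gamma_j Q_{j-1}.
\]
Substituting and invoking Lemma \ref{Lemma2}, namely $\langle Q_i,Q_\ell\rangle=\delta_{i\ell}$, I read off $\langle Q_i, f^{*}(f(Q_j))\rangle$ as the $(i,j)$ entry of the symmetric tridiagonal matrix with diagonal entries $\delta_0,\ldots,\delta_k$ and off-diagonal entries $\gamma_1,\ldots,\gamma_k$—precisely the matrix $T_k$ built in Step 1.1/2.1. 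Hence $\langle f(\tilde X),f(\tilde X)\rangle = h^{T}T_k h$.

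Next I would treat the linear term. Since $t_0=R_0=-f^{*}(E)$ and $Q_0=t_0/\gamma_0$, we have $f^{*}(E)=-\gamma_0 Q_0$, so
\[
\langle f(\tilde X), E\rangle \;=\; \langle \tilde X, f^{*}(E)\rangle \;=\; -\gamma_0\,\Big\langle \sum_{i=0}^{k} h^i Q_i,\; Q_0\Big\rangle \;=\; -\gamma_0 h^0 \;=\; -\gamma_0 h^{T}e_1,
\]
again by orthonormality. Combining the two pieces yields the claimed identity
\[
\tfrac12\langle f(\tilde X),f(\tilde X)\rangle - \langle f(\tilde X),E\rangle \;=\; \tfrac12 h^{T}T_k h + \gamma_0 h^{T}e_1.
\]
Finally, positive semi-definiteness of $T_k$ comes for free from this identity: for every $h\in\R^{k+1}$, setting $\tilde X=(Q_0,\ldots,Q_k)(h\otimes I)$ gives $h^{T}T_k h=\langle f(\tilde X),f(\tilde X)\rangle=\|f(\tilde X)\|^2\ge 0$.

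The only delicate point I anticipate is the boundary convention $Q_{-1}=0$ in the first application of the three-term recurrence (and the fact that the initial residue $R_0=-f^{*}(E)$ carries a sign), but both are settled by a direct check from the initialization step of Algorithm 3.1. Everything else is bookkeeping with orthonormal expansions.
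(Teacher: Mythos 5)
Your proposal is correct and follows essentially the same route as the paper: both arguments rewrite $\langle f(\tilde X),f(\tilde X)\rangle$ via the adjoint and the Lanczos three-term recurrence (the paper packages the recurrence as $(M_0,\dots,M_k)=(Q_0,\dots,Q_k)(T_k\otimes I)+(0,\dots,0,\gamma_{k+1}Q_{k+1})$ while you expand it entrywise as a Gram matrix, with the $\gamma_{k+1}Q_{k+1}$ term vanishing by orthogonality), handle the linear term through $f^{*}(E)=-\gamma_0 Q_0$, and obtain positive semi-definiteness from $h^{T}T_kh=\|f(\tilde X)\|^2\ge 0$. No gaps.
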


\begin{proof} By the definition of $T_k $ and $Q_k$  $(k=0,1,2,\dots )$, we have
\begin{equation*}
\begin{array}{l}
(M_0,M_1,\dots,M_k)
=(Q_0 ,Q_1 ,\dots ,Q_k)(T_k \otimes I)+(0,\dots ,0,\gamma _{k+1} Q_{k+1} ),
\end{array}
\end{equation*}
where $M_i=f^*(f(Q_i))$, $0\leqslant i\leqslant k$.
Hence, we have\\[2mm]
\small{
\hspace*{-2mm}
$\langle {f(\tilde {X}),f(\tilde {X})} \rangle  \ =\langle{\tilde {X},f^*(f(\tilde {X}))} \rangle  =$ \\[2mm]
\hspace*{-3.2mm}
$\hspace*{3mm} =\langle {(Q_0 ,Q_1 ,\dots, Q_k )(h\otimes I),f^*f(Q_0 h^0+Q_1 h^1+\dots+Q_k h^k)} \rangle    $\\[2mm]
\hspace*{-3mm}
$\hspace*{3mm} =\langle {(Q_0 ,Q_1 ,\dots, Q_k )(h\otimes I),M_0h^0+M_1h^1+\dots+M_kh^k} \rangle $ \\[2mm]
\hspace*{-3mm}
$\hspace*{3mm} =\langle {(Q_0 ,Q_1 ,\dots, Q_k )(h\otimes I),(M_0,M_1,\dots,M_k)(h\otimes I)} \rangle $\\[2mm]
\hspace*{-3mm}
 $\hspace*{3mm} =\langle {(Q_0 ,Q_1 ,\dots, Q_k )(h\otimes I),(Q_0 ,Q_1,\dots, Q_k )(T_k \otimes I)(h\otimes I)} \rangle $\\
 \hspace*{-3mm}
$\hspace*{3mm} = \tr[(Q_0 ,Q_1 ,\dots, Q_k )(h^TT_k h\otimes I)(Q_0 ,Q_1 ,\dots,
Q_k )^T] $\\
\hspace*{-3mm}
 $\hspace*{3mm} =h^TT_k h.$ \\[1mm]
}
and \\[1mm]
$\langle {f(\tilde {X}),E} \rangle=\langle {\tilde {X},f^*(E)} \rangle =-\langle {Q_0 h^0+Q_1 h^1+...+Q_k h^k,\gamma _0 Q_0 } \rangle=-\gamma _0 \langle h^0Q_0,Q_0 \rangle=-\gamma _0h^0=-\gamma _0 h^Te_1$\\

\noindent
Therefore the equation holds. And obviously $h^TT_k h =\langle {f(\tilde {X}),f(\tilde {X})} \rangle \ge 0$ for all $h \in \R^{k+1}$. Therefore  $T_k$ is positive semi-definite and the proof is complete.  \end{proof}


\begin{theorem}\label{Theorem2} \cite[Theorem 2]{XXP15}
 Assume that the sequences $\{Q_k \}$, $\{R_k \}$, $\{\gamma _k \}$, $\{\delta _k \}$, $\{\alpha _k \}$ and $\{\beta _k \}$ are generated by Algorithm 3.1. Then the following equations hold for all $k=0,1,2,\dots $.
 
 \small{
\begin{equation}\label{7}
\hspace*{-3mm}
Q_k =(-1)^k\frac{R_k }{\left\| {R_k } \right\|},
\delta _k =\left\{ {\begin{array}{l} \textstyle{1 \over {\alpha _k }},k=0, \\
 \textstyle{1 \over {\alpha _k }}+\textstyle{{\beta _{k-1} } \over {\alpha
_{k-1} }},k>0, \\
 \end{array}} \right.
\gamma _k =\frac{\sqrt {\beta _{k-1} } }{\alpha _{k-1} }
\end{equation}}
\end{theorem}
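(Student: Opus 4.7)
The plan is to proceed by induction on $k$, exploiting the well-known connection between the CG residuals $\{R_k\}$ and the Lanczos basis $\{Q_k\}$ for the Krylov subspace of $f^* \circ f$ applied to $R_0$. Since Lemma \ref{Lemma1} gives orthogonality of the $R_k$ and Lemma \ref{Lemma2} gives orthonormality of the $Q_k$, and both sequences live in the same Krylov space, the matrices $R_k$ and $Q_k$ must be proportional up to a sign, and the three-term Lanczos coefficients $\delta_k,\gamma_k$ must be expressible through the CG scalars $\alpha_k,\beta_k$.

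For the base case $k=0$, I would read off directly from the initialization of Algorithm 3.1: since $t_0=R_0$ and $\gamma_0=\|R_0\|$, the formula $Q_0 = R_0/\|R_0\|$ holds with the sign $(-1)^0 = 1$. Next, $\delta_0 = \|f(Q_0)\|^2 = \|f(R_0)\|^2/\|R_0\|^2$, while $\alpha_0 = \|R_0\|^2/\|f(P_0)\|^2 = \|R_0\|^2/\|f(R_0)\|^2$ (using $P_0=-R_0$), so $\delta_0 = 1/\alpha_0$ as required. The claim about $\gamma_0$ is vacuous at $k=0$.

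For the inductive step, assume the three identities hold for all indices $\le k$. The key computation is to extract $f^*(f(R_k))$ from the CG recursions. Starting from $R_{k+1} = R_k + \alpha_k f^*(f(P_k))$ and $P_k = -R_k + \beta_{k-1}P_{k-1}$, I solve for $f^*(f(P_k))$ and iterate once to obtain
\begin{equation*}
f^*(f(R_k)) \;=\; -\tfrac{1}{\alpha_k}R_{k+1} + \Bigl(\tfrac{1}{\alpha_k} + \tfrac{\beta_{k-1}}{\alpha_{k-1}}\Bigr)R_k - \tfrac{\beta_{k-1}}{\alpha_{k-1}}R_{k-1}.
\end{equation*}
Substituting $R_j = (-1)^j\|R_j\|Q_j$ for $j=k-1,k$ (by induction) and dividing through by $(-1)^k\|R_k\|$ yields
\begin{equation*}
f^*(f(Q_k)) \;=\; \tfrac{\|R_{k+1}\|}{\alpha_k\|R_k\|}\bigl[(-1)\tfrac{(-1)^{k+1}}{(-1)^k}Q_{k+1}\bigr] \;+\; \Bigl(\tfrac{1}{\alpha_k} + \tfrac{\beta_{k-1}}{\alpha_{k-1}}\Bigr)Q_k \;+\; \tfrac{\beta_{k-1}\|R_{k-1}\|}{\alpha_{k-1}\|R_k\|}Q_{k-1},
\end{equation*}
where $Q_{k+1}$ here is provisionally defined by forcing the identity; the sign works out to $(-1)^{k+1}R_{k+1}/\|R_{k+1}\|$, matching the pattern.

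To close the induction I compare this expansion with the Lanczos recurrence $f^*(f(Q_k)) = \gamma_{k+1}Q_{k+1} + \delta_k Q_k + \gamma_k Q_{k-1}$ coming from step 1.1. Because $Q_{k-1},Q_k,Q_{k+1}$ are mutually orthonormal by Lemma \ref{Lemma2}, coefficients must match: this gives $\delta_k = 1/\alpha_k + \beta_{k-1}/\alpha_{k-1}$, and $\gamma_k = \beta_{k-1}\|R_{k-1}\|/(\alpha_{k-1}\|R_k\|)$. Using $\beta_{k-1} = \|R_k\|^2/\|R_{k-1}\|^2$ this simplifies to $\gamma_k = \sqrt{\beta_{k-1}}/\alpha_{k-1}$, and the coefficient of $Q_{k+1}$ likewise simplifies to $\gamma_{k+1} = \sqrt{\beta_k}/\alpha_k$, advancing the inductive hypothesis. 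Finally, the relation just derived identifies $Q_{k+1}$ as $(-1)^{k+1}R_{k+1}/\|R_{k+1}\|$ as claimed.

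The main obstacle is bookkeeping: the alternating sign $(-1)^k$ has to be tracked carefully through the substitution, and one must verify that the provisional $Q_{k+1}$ extracted from the three-term recurrence has unit norm and agrees (up to sign) with $t_{k+1}/\gamma_{k+1}$ as defined in the algorithm. The norm check is immediate from $\|R_{k+1}\|$ canceling correctly, and the sign bookkeeping is handled cleanly by pulling the factor $(-1)^k$ out on both sides before dividing.
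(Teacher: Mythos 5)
Your proof is correct, but it is organized differently from the paper's. The paper first establishes the proportionality $Q_k=(-1)^kR_k/\|R_k\|$ by a Krylov-subspace argument: it expands both $Q_k$ and $R_k$ as polynomials in $H=f^*\circ f$ applied to $f^*(E)$, so both lie in $K_k$ and, by Lemmas \ref{Lemma1} and \ref{Lemma2}, both are orthogonal to $K_{k-1}$; since the orthogonal complement of $K_{k-1}$ in $K_k$ is one-dimensional they must be scalar multiples of each other, with the sign read off from the leading coefficients. It then obtains $\delta_k$ and $\gamma_k$ by directly evaluating the inner products $\langle f(Q_k),f(Q_k)\rangle$ and $\langle t_k,t_k\rangle$ using the CG orthogonality relations. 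You instead run a single simultaneous induction: you solve the CG recursions for $f^*(f(R_k))$ to get an explicit three-term recurrence, substitute the inductive identification $R_j=(-1)^j\|R_j\|Q_j$, and match coefficients against the Lanczos recurrence $t_{k+1}=f^*(f(Q_k))-\delta_kQ_k-\gamma_kQ_{k-1}$. Your route avoids the polynomial expansions (8)--(9) entirely and delivers all three identities (plus the advancement of $\gamma_{k+1}$) in one pass, which is arguably the cleaner presentation of the classical CG--Lanczos equivalence; the paper's route has the advantage that the proportionality of $Q_k$ and $R_k$ is established independently of the coefficient formulas, which it then reuses elsewhere. One small point to make explicit in your write-up: identifying the algorithm's $\delta_k=\|f(Q_k)\|^2$ with the coefficient of $Q_k$ in your expansion requires taking the inner product of the expansion with $Q_k$ and invoking the orthogonality of $R_{k\pm1}$ to $R_k$ from Lemma \ref{Lemma1}; and the step $k=0\to1$ should be noted separately since no $R_{k-1}$ term is present there (consistent with $Q_{-1}=0$ in the initialization). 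Neither of these is a gap, just bookkeeping worth stating.
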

\noindent

\begin{proof}  By the definition of $Q_k $
and $R_k $, we have
\small{
\begin{equation}\label{8}
\begin{array}{lll}
 \hspace*{-3mm}Q_k = a_k H^k f^*(E)+a_{k-1}
H^{k-1}f^*(E)  +  \ a_0 f^*(E),
\end{array}
\end{equation}}
\small{
\begin{equation}\label{9}
\begin{array}{lll}
R_k =(-1)^kb_k H^k f^*(E) +  (-1)^{k-1}b_{k-1}
H^{k-1} f^*(E)+ \dots  +b_0 f^*(E),
\end{array}
\end{equation}}
where the $a_i $ and $b_i$  (for $i=0,1,2,\dots ,k)$ are real numbers and $H=f^* \circ f$. These equations imply that $Q_k $ and $R_k $ belong to the space 
\begin{eqnarray}\label{9'}
K_k =span\left\{ H^k f^*(E),H^{k-1}f^*(E),\dots, f^*(E) \right\}.
\end{eqnarray}
Furthermore, we have
\begin{eqnarray*}
span\left\{ {Q_{k-1} ,Q_{k-2} ,\dots ,Q_0 } \right\}=K_{k-1} 
=span\left\{ {R_{k-1} ,R_{k-2} ,\dots ,R_0 } \right\}.
\end{eqnarray*}
By Lemmas \ref{Lemma1} and \ref{Lemma2} we have 
\begin{equation}\label{17}
Q_k \bot K_{k-1} \ \  \text{and} \ \  R_k \bot K_{k-1}.
\end{equation}
Hence, $Q_k $ and $R_k $ must be linear correlation, or $Q_k =c_k R_k$ since $Q_k + K_{k-1}=R_k + K_{k-1}=K_k$.  
So, there exists a real number $c_k $ such that $Q_k =c_k R_k $. Noting that $\left\| {Q_k }
\right\|=1$, we have by (\ref{8}) and (\ref{9}) that $$Q_k =(-1)^k R_k /\left\| {R_k } \right\|,$$ viz $Q_k = (-1)^k \frac{R_k}{\left\| R_k \right\|}$. This establishes the first equation of \eqref{7}.
Noting that the first equation in (\ref{7}) holds we have for $k = 0$
\begin{eqnarray*}
\delta _0 &=&\langle {f(Q_0),f(Q_0)} \rangle 
=\langle {f(R_0),f(R_0)} \rangle   /\|R_0\|^2 
=\langle {f(P_0),f(P_0) } \rangle   /\|R_0\|^2 \ = \ 1/\alpha _0 .
\end{eqnarray*}
And for  $k>0$ we have\\[-3mm]

\small{
$\hspace*{-3mm}\delta _k \ = \ \langle {f(Q_k),f(Q_k)} \rangle  = \ \langle {f(R_k),f(R_k)} \rangle  /\|R_k\|^2 $\\
$ \hspace*{2mm} = \ \langle {f(-P_k +\beta _{k-1} P_{k-1} ),f(-P_k +\beta _{k-1} P_{k-1})} \rangle  / \| R_k \|^2   + \beta _{k-1}^2\langle {f(P_{k-1}),f(P_{k-1})} \rangle  /\|R_k\|^2 $\\
$ \hspace*{2mm} = \ 1/\alpha _k +\beta _{k-1} ^2\langle {f(P_{k-1}),f(P_{k-1})}
\rangle   /\|R_k\|^2 = \ 1/\alpha _k +\beta _{k-1}\langle {f(P_{k-1}),f(P_{k-1})}
\rangle   /\|R_{k-1}\|^2 $\\
$\hspace*{2mm} = \ 1/\alpha _k +\beta _{k-1} /\alpha _{k-1}. $\\[-2mm]
}

\noindent \normalsize
We have just established the second equation in \eqref{7}.
By the definition of $\gamma _k $, we have\\[-2mm]

\small{
$\hspace*{-3mm}\gamma _k ^2 \ = \ \langle {t_k ,t_k } \rangle  = \ \langle f^*f(Q_{k-1}) -\delta _{k-1} Q_{k-1} - \gamma _{k-1} Q_{k-2} ,\gamma_k Q_k \rangle = \ \gamma _k \langle {f^*f(Q_{k-1}),\gamma_k Q_k } \rangle  $\\
$ \hspace*{2mm} = \ - \ \gamma _k \langle {f^*f(R_{k-1}),R_k} \rangle  /(\left\|{R_{k-1} } \right\| \left\| {R_k } \right\|)= \ -\gamma _k \langle{ f^*f(-P_{k-1} +\beta _{k-2} P_{k-2} ),R_k }
\rangle  /(\left\| {R_{k-1} } \right\|\left\| {R_k } \right\|)$ \\
$ \hspace*{2mm} = \ {\gamma _k \langle {(R_{k-1} -R_k )/\alpha _{k-1} +(\beta _{k-2}
/\alpha _{k-2} )(R_{k-1} -R_{k-2} ),R_k } \rangle \over (\left\| {R_{k-1} }
\right\| \left\| {R_k } \right\|) }  = \ \gamma _k \frac{\sqrt {\beta _{k-1} } }{\alpha _{k-1} }.$ \\[-1mm]
}

\noindent \normalsize
Hence the third equation in \eqref{7} holds and the proof is complete.  
\end{proof}

\begin{remark}\label{Remark1}
Theorem \ref{Theorem2} relates  the sequences $\{Q_k \}$,
$\{R_k \}$, $\{\gamma _k \}$, $\{\delta _k \}$, $\{\alpha _k \}$ and $\{\beta
_k \}$. This will be used to reduce the cost of our calculation in Section III.
\end{remark}

\begin{lemma}\label{Lemma6}

In the first branch of Algorithm 3.1, $X_{k+1}$ is a solution of the problem
 \begin{equation}\label{15}
\mathop {\min}\limits \psi(X) = \frac{1}{2}\langle  {f(X),f(X)} \rangle  -\langle {f(X),E}\rangle ,
\end{equation}
 where $X=(Q_0 ,Q ,\dots ,Q_k )(h \otimes I)$ with $h \in \R^{k+1}$, or equivalently for all $X \in K_k$ as defined in (\ref{9'}).
\end{lemma}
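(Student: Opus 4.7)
The plan is to recognize \eqref{15} as minimization of the convex quadratic $\psi(X) = \tfrac12 \langle X, H X\rangle - \langle X, f^*(E)\rangle$, where $H := f^* \circ f$ is self-adjoint and positive semi-definite with respect to the Frobenius inner product. This is the exact form that the Conjugate Gradient method of Hestenes--Stiefel minimizes over successive Krylov subspaces, so the claim should reduce to a first-order optimality check on the subspace $K_k$ defined in \eqref{9'}. Consequently, all the machinery needed has already been prepared in Lemma~\ref{Lemma1} (orthogonality of the residuals $R_i$), Theorem~\ref{Theorem3} (identification of $R_k$ as the gradient of $\psi$ at $X_k$), and the Krylov-span identities established in the proof of Theorem~\ref{Theorem2}.

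First I would rewrite $\psi(X) = \tfrac12\langle X, f^*(f(X))\rangle - \langle X, f^*(E)\rangle$ using $\langle f(X), Y\rangle = \langle X, f^*(Y)\rangle$, so that $\psi$ is convex in $X$ and its gradient with respect to the Frobenius inner product is $\nabla\psi(X) = f^*(f(X)) - f^*(E)$. Theorem~\ref{Theorem3} then gives the key identification $\nabla\psi(X_{k+1}) = R_{k+1}$. Next I would verify that the admissible set is exactly $K_k$: since $X_{k+1} = \sum_{i=0}^{k}\alpha_i P_i$ and the recursions $P_i = -R_i + \beta_{i-1}P_{i-1}$ together with \eqref{8}--\eqref{9} show that
\[
\operatorname{span}\{P_0,\dots,P_k\} = \operatorname{span}\{R_0,\dots,R_k\} = \operatorname{span}\{Q_0,\dots,Q_k\} = K_k,
\]
so the feasible parameterization $(Q_0,\dots,Q_k)(h\otimes I)$ with $h\in\R^{k+1}$ is precisely $K_k$, and $X_{k+1}\in K_k$.

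For convex quadratics, the first-order optimality condition is both necessary and sufficient: $X_{k+1}$ minimizes $\psi$ on the linear subspace $K_k$ if and only if $\nabla\psi(X_{k+1}) = R_{k+1}$ is orthogonal to $K_k$. Invoking Lemma~\ref{Lemma1}, we have $\langle R_{k+1}, R_j\rangle = 0$ for all $0\le j\le k$, and since $K_k = \operatorname{span}\{R_0,\dots,R_k\}$, this yields $R_{k+1}\perp K_k$. Hence $X_{k+1}$ solves \eqref{15} over $K_k$.

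The main obstacle, such as it is, will be dispatching the equivalence of the two descriptions of the feasible set, namely $\{(Q_0,\dots,Q_k)(h\otimes I) : h\in\R^{k+1}\}$ versus $K_k$, and making sure that in the first branch the iterate $X_{k+1}$ really does land in $K_k$ (as opposed to $K_{k+1}$). This is purely bookkeeping on the recursions already used in the proofs of Theorem~\ref{Theorem2} and Lemma~\ref{Lemma2}, and no new analytic content is required beyond what the previously proved orthogonality lemmas supply.
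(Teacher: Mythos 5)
Your proposal is correct and follows essentially the same route as the paper: the paper's proof simply carries out the expansion $\psi(X_{k+1}+W)=\psi(X_{k+1})+\langle W,R_{k+1}\rangle+\tfrac12\langle f(W),f(W)\rangle$ for $W\in K_k$ and kills the linear term by the orthogonality $R_{k+1}\perp K_k$, which is exactly your first-order sufficiency argument for the convex quadratic made explicit. The only cosmetic difference is that you invoke the general optimality principle while the paper verifies it by direct computation.
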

\begin{proof}
By Lemmas \ref{1.1} and \ref{1.2} we have 
\begin{equation}\label{17a}
Q_k \bot K_{k-1} \ \  \text{and} \ \  R_k \bot K_{k-1}.
\end{equation}
Using this equation we have\\[-2mm]

\small{
$\psi(X_{k+1}+W) \ =$\\[2mm]
$\hspace*{6mm} \ = \ \frac{1}{2}\langle  {f(X_{k+1}+W),f(X_{k+1}+W)} \rangle   - \langle {f(X_{k+1}+W),E}\rangle $\\[3mm]
$\hspace*{6mm} \ = \ \left\{ \frac{1}{2} \langle  {f(X_{k+1}),f(X_{k+1})} \rangle  - \langle  {f(X_{k+1}),E} \rangle  \right\}   + \ \langle  {f(W),f(X_{k+1})} \rangle  - \langle  {f(W),E} \rangle   + \  \frac{1}{2} \langle  {f(W),f(W)} \rangle $\\[3mm]
$\hspace*{6mm} \ = \ \psi(X_{k+1}) +  \langle  {W, f^*(f(X_{k+1}))} \rangle    - \langle  {W, f^*(E)} \rangle  + \frac{1}{2} \langle  {f(W),f(W)} \rangle $\\[2mm]
$\hspace*{6mm} \ = \  \psi(X_{k+1}) +  \langle  {W, R_{k+1}} \rangle + \frac{1}{2} \langle  {f(W),f(W)} \rangle $\\[3mm]
$\hspace*{6mm} \ = \   \psi(X_{k+1}) + 0 + \frac{1}{2} \langle  {f(W),f(W)} \rangle  \ \geqslant \ \psi(X_{k+1})$\\[2mm]
}
for all $W \in K_k$. Therefore our claim is established.
\end{proof}

\normalsize
\begin{theorem}\label{Theorem5}
At least one of the solutions $h_k$ of Problem (\ref{6}) from the second branch of Algorithm 3.1
lies on the boundary of $\left\{h : \|h\|_2 \le \Delta\right\}$. This $h_k$ solves  the 
optimization problem   
\begin{equation}\label{12}
\hspace*{-3mm} \mathop {\min }\limits_{h\in \R^{k+1}} \frac{1}{2}h^TT_k h+h^T(\gamma _0 e_1)
 \mbox{\rm subject to} \| h \|_2 =\Delta. 
\end{equation}
\end{theorem}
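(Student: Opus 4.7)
The plan is to combine a convex-duality characterization of the minimizers of Problem (\ref{6}) with the switching rule that drove Algorithm 3.1 into its second branch. Since $T_k$ is positive semidefinite by Lemma \ref{Lemma3}, the objective $\phi(h) := \tfrac12 h^\top T_k h + \gamma_0 h^\top e_1$ is convex, and the feasible ball $\{h : \|h\|_2 \le \Delta\}$ is convex and compact, so Weierstrass produces a minimizer $h^\ast$. Standard KKT for this single-constraint convex QP gives a multiplier $\mu^\ast \ge 0$ with $(T_k + \mu^\ast I)h^\ast = -\gamma_0 e_1$ and $\mu^\ast(\|h^\ast\|_2 - \Delta) = 0$. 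If $\mu^\ast > 0$, complementarity forces $\|h^\ast\|_2 = \Delta$ and we are done; the remaining task is to produce a boundary minimizer in the $\mu^\ast = 0$ case, where $h^\ast$ sits strictly inside the ball.

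To exploit the switching rule I translate Problem (\ref{6}) back to the matrix setting. By Lemma \ref{Lemma2} the $Q_i$ are Frobenius-orthonormal, so $\Phi(h) := \sum_{i=0}^k h^i Q_i$ is an isometry from $(\R^{k+1}, \|\cdot\|_2)$ onto $(K_k, \|\cdot\|_F)$ where $K_k = \operatorname{span}\{Q_0,\ldots,Q_k\}$, and Lemma \ref{Lemma3} yields $\phi(h) = \psi(\Phi(h))$ for $\psi(X) = \tfrac12 \langle f(X), f(X)\rangle - \langle f(X), E\rangle$. Hence Problem (\ref{6}) is equivalent to minimizing $\psi$ over $K_k \cap \{X : \|X\|_F \le \Delta\}$, and interior vs.\ boundary status in $h$ coincides with interior vs.\ boundary status of $\Phi(h)$.

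Now I branch on what first triggered Switch $=1$ at some index $k_0 \le k$: either (a) the first-branch CG update gave $\|X_{k_0+1}\|_F > \Delta$, or (b) $f(P_{k_0}) = 0$. In case (a), Lemma \ref{Lemma6} identifies $X_{k_0+1}$ as the unconstrained minimizer of $\psi$ over $K_{k_0}$, and the strict monotonicity $\|X_j\|_F < \|X_{j+1}\|_F$ from Lemma \ref{Lemma5}, propagated through the hypothetical continuation of CG from $k_0$ up to $k$, forces the unconstrained minimizer of $\psi$ over $K_k$ to have Frobenius norm exceeding $\Delta$ as well; in $h$-coordinates this rules out interior optimality entirely, so every optimizer satisfies $\|h^\ast\|_2 = \Delta$. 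In case (b), let $d \in \R^{k+1}$ be the coordinates of $P_{k_0} \in K_{k_0} \subseteq K_k$ in the $Q$-basis (padded with zeros). Because $f(P_{k_0}) = 0$, direct expansion of $\psi(X + s P_{k_0}) = \psi(X)$ yields $\phi(h + td) = \phi(h)$ for all $h,t$, which after matching $t$-coefficients gives $T_k d = 0$ and $e_1^\top d = 0$. Then $t \mapsto \|h^\ast + t d\|_2^2 = \|h^\ast\|_2^2 + 2t\, h^{\ast\top} d + t^2 \|d\|_2^2$ is a strictly convex quadratic that is $\le \Delta^2$ at $t=0$ and unbounded above, so by the intermediate value theorem some $t^\ast$ satisfies $\|h^\ast + t^\ast d\|_2 = \Delta$, and $h^\ast + t^\ast d$ is a boundary minimizer with the same optimal value $\phi(h^\ast)$. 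Finally, any boundary minimizer of Problem (\ref{6}) also solves Problem (\ref{12}) because the feasible set of (\ref{12}) is contained in that of (\ref{6}), so the optimal values must agree.

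The main obstacle will be the careful bookkeeping of the case analysis. In case (a) one must confirm that Lemma \ref{Lemma5}'s monotonicity for first-branch CG iterates legitimately governs the unconstrained minimizer of $\psi$ over the extended Krylov subspace $K_k$ built by the Lanczos recurrence in the second branch, and that if the hypothetical CG continuation encounters $f(P_m) = 0$ somewhere between $k_0$ and $k$ then the argument cleanly hands off to case (b) at that index. In case (b) the care is to verify that the kernel direction supplied by $P_{k_0}$ really populates a coordinate vector in $\R^{k+1}$ (immediate from $K_{k_0} \subseteq K_k$) and that $e_1^\top d = 0$ holds rigorously, which falls out of $Q_0 \propto R_0$ together with the CG orthogonalities of Lemma \ref{Lemma1}. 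Once these two technical points are settled, the theorem follows from the convexity plus KKT core above.
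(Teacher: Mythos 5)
Your proof is correct and follows essentially the same route as the paper's: both rest on Lemma \ref{Lemma3} (positive semidefiniteness of $T_k$ and the $Q_i$-isometry), the KKT conditions, Lemmas \ref{Lemma5} and \ref{Lemma6} to show that when $T_k$ is positive definite the unconstrained CG minimizer over $K_k$ lies outside the ball, and a null-direction translation when $T_k$ is singular, so your reorganization by switch trigger rather than by definiteness of $T_k$ is cosmetic. The one point to make explicit is that your case-(a) hand-off is exhaustive because, by Lemma \ref{Lemma1}, the $f(P_j)$ are mutually orthogonal, so if the hypothetical CG continuation reaches step $k$ with every $f(P_j)\neq 0$ then $f$ is injective on $K_k$ and $T_k$ is positive definite; without that observation your claim that case (a) ``rules out interior optimality entirely'' would overreach when $T_k$ is singular, since the CG minimizer having norm exceeding $\Delta$ does not by itself exclude other unconstrained minimizers inside the ball.
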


\begin{proof} Assume that every solution $h_k$ of Problem (\ref{6}) lies in the open set $\left\{h : \|h\|_2 < \Delta\right\}$.  Then, according to \cite[Theorem 2]{XXP15} as rephrased in Theorem \ref{Theorem1} in the next section, there exists a nonnegative number $\lambda _k $ such that 
\begin{eqnarray}\label{2.8}
(T_k +\lambda _k I)h_k =-\gamma _0 e_1 ,\nonumber\\
 \text{ with } \lambda _k \cdot ( \left\| {h_k } \right\|_2-\Delta )=0 \  \text{and}  \  \left\| {h_k } \right\|_2 \le \Delta.
\end{eqnarray} 
Since $\|h_k\|_2 < \Delta$  the second formula of (\ref{2.8}) implies that $\lambda _k = 0$. And  the first formula of (\ref{2.8}) then ensures  $T_k h_k =-\gamma_0 e_1 $. \\[1mm]
By Lemma \ref{Lemma3}, $T_k $ is  positive semidefinite.\\[2mm]
Case (1): We  prove that if $T_k $ is positive definite, then solving problem (\ref{6}) in the second branch of Algorithm 3.1 cannot occur, leading to a contradiction. To show that the first branch (CG method) has lead to success in this case we show that $P_j \ne 0$ and $f(P_j) \ne 0$ for all $j=0,1,2,\cdots,k$ and $\|X_j\|<\Delta$ for all $j=1,2,\cdots,k+1$.\\[1mm]
Since  $T_k $ is positive definite, its diagonal entries $\delta_j \ne 0$  for all $j=0,1,2,\cdots,k$. Then $f(Q_j) \ne 0$ and thus $Q_j \ne 0$. From Theorem \ref{Theorem2} we know that $R_j \ne 0$. Since $P_{j} =-R_{j} +\beta _{j-1} P_{j-1}$ and $R_j \bot P_{j-1}$, we conclude that $P_j \ne 0$ for all $j=1,2,\cdots,k$.\\[1mm]    

From the proof of Lemma \ref{Lemma3}, we have $$\langle {f(X),f(X)} \rangle  = h^T T_k h > 0$$ for all $ X=(Q_0 ,Q_1 ,\dots ,Q_k )(h \otimes I)\neq 0$, or equivalently for all $ X\in K_k \setminus 0$. Since {$P_k \in K_k \setminus 0$}, we have $\langle {f(P_k),f(P_k)} \rangle  > 0$. Then $f(P_k) \ne 0$. 
Since  for  $j=0,1,2,\cdots,k$ each $T_j $ is positive definite  as a submatrix of $T_k $, we conclude that   $f(P_j) \ne 0$ for all $j$. 
As $T_k $ is positive definite,  $h_k =-T_k^{-1} (\gamma _0 e_1 )\neq 0$ with
$  \| h_k \|_2 <\Delta $. Thus $h_k$  is also a solution of Problem (\ref{6}).
Clearly $h_k =-T_k^{-1} (\gamma _0 e_1 )$ must be unique as the solution of $\mathop {\min }\limits_{h\in \R^{k+1}} \frac{1}{2}h^TT_k h+h^T(\gamma _0 e_1)$. This  combined with Lemma \ref{Lemma3} states  that $\tilde{X}_k=(Q_0 ,Q_1 ,\dots ,Q_k )(h_k \otimes I)$ is the unique solution of Problem (\ref{15}). From Lemma \ref{Lemma6}, we have $ X_{k+1}= \tilde{X}_k$ with $X_{k+1}$  from the CG method part. Since $\| {\tilde{X}_k} \|= \| h_k\|_2<\Delta $, $ \|X_{k+1}\|<\Delta$. From Lemma \ref{Lemma5}, we know that $\|X_1\| <\cdots   < \|X_j\| <\cdots < \|X_{k+1}\|<\Delta$. Therefore in its first $k$ iteration steps, Algorithm 3.1 has only been implemented inside the first, the CG  branch, which is a contradiction.\\[2mm]
Case (2). If $T_k $ is positive
semidefinite but not definite, then there exists a vector $z$ such that $T_k (\tilde{h}_k +z)=-\gamma _0 e_1 $ and  $\| \tilde{h}_k +z \|=\Delta $. This implies that $\tilde{h}_k +z = h_k$ is also a solution of Problem (\ref{6}) on the boundary.   \end{proof}


      \section{Proof of Theorem \ref{Theorem1} and Lemma \ref{Lemma4}}
 \subsection{Proof of Theorem \ref{Theorem1}}
The proof of Theorem \ref{Theorem1} extends the proof of Theorem 2 in \cite{XXP15} that was  given there for explicit 1-term Sylvester functions, to multi-term ones and it is now re-formulated in terms of $f$ and its  adjoint function $f^*$.\\[-4mm]

\begin{proof} Assume that there is a scalar $\lambda ^\ast \ge 0$ such that (\ref{3}) holds. Define 
\begin{eqnarray*}
\varphi (X)&=&\frac{1}{2}\langle {f(X),f(X)} \rangle  -\langle {f(X),E} \rangle , \ \ \text{and} \\
 \mathop \varphi \limits^\wedge (X) &=& 
  \frac{1}{2}\langle {f(X),f(X)} \rangle   +\frac{1}{2}\lambda ^\ast \cdot \langle {X,X} \rangle -\langle {f(X),E} \rangle  
 = \varphi (X)+\frac{1}{2}\lambda ^\ast  \cdot \langle {X,X} \rangle .
\end{eqnarray*}

\noindent
For any matrix $W\in \R^{m\times n}$, we have\\[2mm]
\small{
$\hspace*{15mm} \mathop \varphi \limits^\wedge (X_\ast +W)\ = $\\[-4mm]
\begin{eqnarray*}
&=& \frac{1}{2}\langle
{f(X_\ast +W),f(X_\ast +W)} \rangle  +\frac{1}{2}\lambda ^\ast
\langle {(X_\ast +W),(X_\ast +W)} \rangle  -\langle {f(X_\ast+W),E} \rangle  \\
&=&\left\{ \frac{1}{2}\langle {f(X_\ast),f(X_\ast)} \rangle 
+ \frac{1}{2}\lambda ^\ast \langle {X_\ast ,X_\ast } \rangle  -\langle {f(X_\ast),E} \rangle  \right\}\\
&&\quad +\langle {f(W),f(X_\ast)}\rangle   +\lambda ^\ast \langle {W,X_\ast } \rangle 
-\langle {f(W),E} \rangle  + \frac{1}{2}\langle {f(W),f(W)}\rangle +\frac{1}{2}\lambda ^\ast\langle {W,W} \rangle  \\
&=&\mathop \varphi \limits^\wedge (X_\ast )+\langle {W,(f^*f(X_\ast) -f^*(E) +\lambda ^\ast X_\ast)}\rangle  +\frac{1}{2}\langle {f(W),f(W)} \rangle  +\frac{1}{2}\lambda ^\ast \langle {W,W}\rangle \\
&=&\mathop \varphi \limits^\wedge (X_\ast )+\frac{1}{2}\langle {f(W),f(W)}\rangle 
+\frac{1}{2}\lambda ^\ast \langle {W,W} \rangle \ \ge \
\mathop \varphi \limits^\wedge (X_\ast ). \\[-3mm]
\end{eqnarray*}}
This implies that $X_\ast$ is a global minimizer of the function $\mathop \varphi \limits^\wedge (X)$. Since $\mathop \varphi \limits^\wedge (X)\ge
\mathop \varphi \limits^\wedge (X_\ast )$ for all $X\in \R^{m\times n}$, we
have
\[
\varphi (X)\ge \varphi (X_\ast )+\frac{1}{2}\lambda ^\ast (\langle
{X_\ast ,X_\ast } \rangle  -\langle {X,X} \rangle ).
\]
Now $\lambda ^\ast (\left\| {X_\ast } \right\|-\Delta )=0$ implies
that $\lambda ^\ast (\left\| {X_\ast } \right\|-\Delta )\cdot  (\left\| {X_\ast } \right\|+\Delta ) = \lambda ^\ast (\langle {X_\ast ,X_\ast } \rangle  -\Delta
^2)=0$. Consequently,  
\[
\varphi (X)\ge \varphi (X_\ast )+\frac{1}{2}\lambda ^\ast (\Delta
^2-\langle {X,X} \rangle  )
\]
always holds.
Hence for $\lambda ^\ast \ge 0$ we have $\varphi (X)\ge \varphi (X_\ast )$
for all $X\in \R^{m\times n}$ with $\left\| X \right\| \le \Delta$. Therefore $X_\ast $ is a global minimizer of (\ref{1.2}).

Conversely assume that $X_\ast $ is a global solution of Problem (\ref{1.2}).
We show that there is a nonnegative $\lambda ^\ast $ such that satisfies (\ref{3}).\\
We consider two cases, that $\left\| {X_\ast }
\right\| <\Delta$ or that $\left\| {X_\ast } \right\|=\Delta$.\\
If $\left\| {X_\ast } \right\|<\Delta$, then $X_\ast $ is  an
unconstrained minimizer of $\varphi (X)$ and $X_\ast $ satisfies the
stationary point condition $\nabla \varphi (X_\ast )=0$, that is 
\[
f^*(f(X_\ast)) - f^*(E)=0.
\] This implies that (\ref{3}) holds for $\lambda ^\ast =0$.\\
 When $\left\| {X_\ast } \right\|=\Delta $, the second equation of  (\ref{3}) is
 satisfied and consequently $X_\ast $ is  the solution of the constrained problem
\begin{equation*}
\mathop {\min }\limits_{X\in \R^{m\times n}} \varphi (X)
\quad \mbox{subject to}\quad \left\|X \right\|=\Delta.
\end{equation*}
By applying the optimality conditions for constrained optimization to this problem, we know that there exists a scalar $\lambda ^\ast $ such that the Lagrangian function defined by
\begin{equation*}
\zeta (X,\lambda )=\varphi (X)+\frac{1}{2}\lambda (\langle {X,X} \rangle  -\Delta ^2)
\end{equation*}
has a stationary point at $X_\ast $. By setting $\nabla _X \zeta (X_\ast,\lambda ^\ast )$ equal to zero we obtain
\begin{equation}\label{4}
f^*(f(X_\ast))-f^*(E)+\lambda ^\ast X_\ast =0.
\end{equation}
The proof is finished by showing that $\lambda ^\ast \ge 0$. Since  equation (\ref{4}) holds, $X_\ast $ minimizes $\mathop \varphi \limits^\wedge (X)$. Therefore we have
\begin{equation}\label{5}
\varphi (X)\ge \varphi (X_\ast )+\frac{1}{2}\lambda ^\ast (\langle
{X_\ast ,X_\ast } \rangle  -\langle {X,X} \rangle  )
\end{equation}
for all $X\in \R^{m\times n}$. Suppose that there are only negative values
of $\lambda ^\ast $ that satisfy (\ref{4}). Then we have from (\ref{5}) that
\begin{equation*}
\varphi (X)\ge \varphi (X_\ast )
\quad \mbox{whenever} \left\| X \right\| \ge \left\|
{X_\ast } \right\|=\Delta .
\end{equation*}
Since we already know that $X_\ast $ minimizes $\varphi (X)$ for $\left\| X
\right\| \le \Delta $, it follows that $X_\ast $ is a global, i.e.,  unconstrained minimizer of $\varphi (X)$.
Therefore, condition (\ref{4}) holds with $\lambda ^\ast =0$,
which contradicts our assumption that only negative values of $\lambda ^\ast $
can satisfy  (\ref{4}).   \end{proof}

 \subsection{Proof of Lemma \ref{Lemma4}}

\begin{proof} Assume that $h_k $ is the solution of Problem (\ref{6}). Then
there exists a nonnegative number $\lambda _k $ such that the following
identities hold:
\small{
\begin{equation}\label{11}
(T_k +\lambda _k I)h_k =-\gamma _0 e_1, 
 \lambda _k \cdot (\left\| {h_k } \right\|_2 -\Delta )=0 \  \text{and} \ \left\| {h_k } \right\|_2 \le \Delta.
\end{equation}    }
Since $\| {\tilde {X}_k } \|=\left\| {h_k } \right\|_2$  by Lemma \ref{Lemma2} the second and third identities  in (\ref{10}) hold. The first equation in (\ref{11}) can be rewritten as
\begin{eqnarray*}
(T_k \otimes I)(h_k^0 I,h_k^1 I,\dots, h_k^k I)^T+\lambda _k (h_k^0 I,h_k^1
I,\dots, h_k^k I)^T  +(\gamma _0 I,0,\dots, 0)^T=0,
\end{eqnarray*}
and thus
\begin{eqnarray*}
(Q_0 ,Q_1 ,\dots ,Q_k )[(T_k \otimes I)(h_k^0 I,h_k^1 I,\dots, h_k^k
I)^T+  \lambda _k (h_k^0 I,h_k^1 I,\dots, h_k^k I)^T +(\gamma _0 I,0,\dots,
0)^T]=0.
\end{eqnarray*}
Hence\\[-1mm]
\begin{equation*}
f^*(f(\tilde {X}_k)) +\lambda _k \tilde {X}_k -f^*(E) -\gamma _{k+1} h_k
^kQ_{k+1} =0.
\end{equation*} 
\end{proof}

 \end{appendices}

\end{document}